\documentclass{birkjour}

\usepackage[colorlinks]{hyperref}
\usepackage{dsfont}
\usepackage{amssymb}
\usepackage{mathrsfs}
\usepackage{dsfont}
\usepackage{bm}
\usepackage{enumerate}

%\usepackage[sorting=nyt]{biblatex}
%\addbibresource{mortaza-papers.bib}
%\addbibresource{mortaza-books.bib}

\newtheorem{thm}{Theorem}[section]
\newtheorem{cor}[thm]{Corollary}
\newtheorem{lem}[thm]{Lemma}
\newtheorem{prop}[thm]{Proposition}

\theoremstyle{definition}
\newtheorem{defn}[thm]{Definition}

\theoremstyle{remark}
\newtheorem{rem}[thm]{Remark}
\newtheorem{ex}[thm]{Example}

\numberwithin{equation}{section}

\usepackage{xspace}
\newcommand{\wcbs}{\textsc{wcbs}\xspace}

\newcommand{\WC}{\mathbb{W}}

\renewcommand{\H}{\mathcal{H}}
\newcommand{\B}{\mathbb{B}}
\newcommand{\K}{\mathbb{K}}

\newcommand{\T}{\bT}
\newcommand{\F}{\Lambda}
\newcommand{\G}{\Gamma}

\newcommand{\W}{\mathcal{W}}
\newcommand{\diff}{\,\mathrm{d}}

\newcommand{\FI}{$\Rightarrow$}
\newcommand{\IFF}{$\Leftrightarrow$}

\newcommand{\ol}{\overline}

\newcommand{\e}{\epsilon}

\newcommand{\w}{\omega}
\renewcommand{\c}{\mathscr{C}}
\newcommand{\scrA}{\mathscr{A}}
\newcommand{\M}{\mathscr{M}}
%{\mathscr{P}}

%\newcommand{\X}{\mathcal{X}}

%\mathrel

\newcommand{\C}{\mathbb{C}}

\newcommand{\N}{\mathbb{N}}

\newcommand{\co}{\operatorname{co}}
\newcommand{\cco}{\operatorname{\overline{\co}}}

\newcommand{\A}{A}
\newcommand{\E}{E}%\fA}
\newcommand{\I}{\mathrm{i}}

\newcommand{\bT}{{\bm T}}

\newcommand{\fA}{E}%\mathfrak{A}}

\newcommand{\cA}{\mathcal{A}}

\newcommand{\cF}{\mathcal{F}}

\newcommand{\cG}{\mathcal{G}}

\newcommand{\cX}{\mathcal{X}}

\renewcommand{\a}{\mathbf{a}}

\newcommand{\bx}{\mathbf{x}}

\newcommand{\al}{\alpha}

\newcommand{\be}{\beta}

\newcommand{\la}{\lambda}

\newcommand{\de}{\delta}

\newcommand{\Om}{\Omega}

\newcommand{\set}[1]{\{#1\}}
\newcommand{\bigset}[1]{\bigl\{ #1 \bigr\}}
\newcommand{\Bigset}[1]{\Bigl\{ #1 \Bigr\}}

\newcommand{\bigabs}[1]{\bigl\lvert #1 \bigr\rvert}
\newcommand{\Bigabs}[1]{\Bigl\lvert #1 \Bigr\rvert}
\newcommand{\biggabs}[1]{\biggl\lvert #1 \biggr\rvert}

\newcommand{\bigprn}[1]{\bigl( #1 \bigr)}
\newcommand{\Bigprn}[1]{\Bigl( #1 \Bigr)}
\newcommand{\biggprn}[1]{\biggl( #1 \biggr)}

\newcommand{\enorm}{\lVert\,\cdot\,\rVert}

\newcommand{\Bignorm}[1]{\Bigl\lVert #1 \Bigr\rVert}

\newcommand{\lVertt}{\lvert\mspace{-2mu}\lvert\mspace{-2mu}\lvert}
\newcommand{\rVertt}{\rvert\mspace{-2mu}\rvert\mspace{-2mu}\rvert}
\newcommand{\enormm}{\lVertt\, \cdot \,\rVertt}
\newcommand{\normm}[1]{\lVertt #1 \rVertt}

\newcommand{\ip}[1]{\langle #1 \rangle}

\renewcommand{\leq}{\leqslant}
\renewcommand{\geq}{\geqslant}

\begin{document}

%-------------------------------------------------------------------------
% editorial commands: to be inserted by the editorial office
%
%\firstpage{1} \volume{228} \Copyrightyear{2004} \DOI{003-0001}
%
%\seriesextra{Just an add-on}
%\seriesextraline{This is the Concrete Title of this Book\br H.E. R and S.T.C. W, Eds.}
%
% for journals:
%
%\firstpage{1}
%\issuenumber{1}
%\Volumeandyear{1 (2004)}
%\Copyrightyear{2004}
%\DOI{003-xxxx-y}
%\Signet
%\commby{inhouse}
%\submitted{March 14, 2003}
%\received{March 16, 2000}
%\revised{June 1, 2000}
%\accepted{July 22, 2000}
%---------------------------------------------------------------------------

\title[A Characterization of compact operators]%
{A characterization of compact operators\\ on $\ell^p$-spaces}

\author[M. Abtahi]{Mortaza Abtahi}
\address{School of Mathematics and Computer Sciences\\
Damghan University\\
Damghan, P.O.BOX 36715-364\\
Iran}

\email{abtahi@du.ac.ir; mortaza.abtahi@gmail.com}

\subjclass{Primary 46B45, 47B37; Secondary 47A12, 47A30.}

\keywords{Compact operator, $\ell^p$-space, Hilbert space, weak*\,continuity,
Joint numerical range, Joint numerical radius}

%\thanks{This work was completed with the support of }

\date{\today}

%\dedicatory{aaaaaaa}

\begin{abstract}
  Let $A$ be a Banach space, $p>1$, and $1/p+1/q=1$. If a sequence $\a=(a_i)$ in $A$ has a finite $p$-sum,
  then the operator $\F_\a:\ell^q\to A$, defined by $\F_\a(\be)=\sum_{i=1}^\infty \be_i a_i$, $\be=(\be_i)\in \ell^q$,
  is compact. We present a characterization of compact operators $\F:\ell^q\to A$, and
  prove that $\F$ is compact if and only if $\F=\F_\a$, for some sequence $\a=(a_i)$ in $A$
  with $\bigset{\bigprn{\phi(a_i)}: \phi\in A^*, \|\phi\|\leq 1}$ being
  a totally bounded set in $\ell^p$. For a sequence $(T_i)$ of bounded operators on
  a Hilbert space $\H$, the corresponding operator $\T:\ell^q\to \B(\H)$, defined by
  $\T(\be) = \sum_{i=1}^\infty \be_i T_i$, is compact if and only if the %joint numerical range
  set $\set{\ip{\T x,x}:\|x\|=1}$ is a totally bounded subset of $\ell^p$, where
  $\ip{\T x,x} = (\ip{T_1 x,x}, \ip{T_2 x,x}, \dotsc)$, for $x\in \H$.
  Similar results are established for $p=1$ and $p=\infty$.
\end{abstract}

\maketitle
%\tableofcontents

\section{Introduction}
\label{sec:intro}

Compact operators play a crucial role in functional analysis and operator theory, particularly in the study
of infinite-dimensional spaces. Their significance lies in their well-behaved spectral properties,
approximation capabilities, and their utility in solving complex problems in mathematical physics, differential equations,
and numerical analysis. 
%Exhibiting properties similar to matrices in finite-dimensional spaces, compact operators
%provide a powerful tool for simplifying and approximating the behavior of more general operators. 
Researchers have
long been drawn to their study, not only for their theoretical elegance but also for their practical applications.
Investigations into their geometric properties, spectral behavior, and interactions with other classes of operators
have established compact operators as a central topic in modern mathematical research. For further exploration
of their geometric properties, we recommend \cite{Bottazzi-approx-com-oper-2013,
Mal-Sain-geo-prop-oper-2019,Sain-approx-comp-oper-2021,
Wojcik-Ortho-compact-oper-2017} and references therein.

Among the first examples of Banach spaces to be systematically studied and have often served as the inspiration
for many concepts in Banach space theory, are the $\ell^p$ spaces. This paper deals with the characterization
of compact operators from $\ell^q$, where $1/p+1/q=1$, to an arbitrary Banach space $A$. It is proved that 
a bounded operator $\F:\ell^q\to A$ is compact if and only if it is a diagonal map
$\F\be = \sum_{i=1}^\infty \be_i a_i$, induced by a sequence $\a=(a_i)$ in $A$ whose 
dual shadow $\set{(\phi(a_i)): \phi\in A^*, \|\phi\|=1}$ is totally bounded in $\ell^p$.

Compact sets in $\ell^p$ play a crucial role in this work. A necessary and sufficient condition for a subset of $\ell^p$
to be compact is given in the following result, known as Kolmogorov compactness theorem;
see \cite[Theorem 4]{Kolmogorov-compactness-thm}.

\begin{thm}
\label{thm:Kolmogorov-Riesz-compactness}
  A set $K$ in $\ell^p$, where $p\in [1,\infty)$, is totally bounded if, and only if,
  $K$ is pointwise bounded and, for every $\e > 0$, there is $m\in\N$ such that
  \begin{equation}\label{eqn:Kolmogorov-Riesz-compactness}
    \Bigprn{\sum_{i > m} |\al_i|^p}^{1/p} < \e,\
    (\al_1,\al_2,\dotsc) \in K.
  \end{equation}
\end{thm}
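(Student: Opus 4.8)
The plan is to prove both implications by reducing to finitely many coordinates through the truncation projections. For each $m\in\N$ I would introduce $P_m\colon\ell^p\to\ell^p$, the map retaining the first $m$ coordinates and annihilating the rest, together with the complementary tail projection $Q_m=I-P_m$; since $\norm{P_m\al}^p=\sum_{i\leq m}|\al_i|^p$ and $\norm{Q_m\al}^p=\sum_{i>m}|\al_i|^p$, both are contractions, and condition \eqref{eqn:Kolmogorov-Riesz-compactness} is precisely the statement that $\norm{Q_m\al}<\e$ holds uniformly over $\al\in K$ once $m$ is large enough.

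For necessity ($\Rightarrow$), assume $K$ is totally bounded. Then $K$ is bounded and, as $|\al_i|\leq\norm{\al}$ for each $i$, pointwise bounded. Given $\e>0$, I would take a finite $(\e/2)$-net $\be^{(1)},\dotsc,\be^{(N)}\in\ell^p$ for $K$. Each $\be^{(k)}\in\ell^p$ has a vanishing tail, so there is $m_k$ with $\norm{Q_{m_k}\be^{(k)}}<\e/2$; setting $m=\max_k m_k$ and picking, for a given $\al\in K$, an index $k$ with $\norm{\al-\be^{(k)}}<\e/2$, the contractivity of $Q_m$ yields
\[
  \norm{Q_m\al}\leq\norm{\al-\be^{(k)}}+\norm{Q_m\be^{(k)}}<\e,
\]
which is \eqref{eqn:Kolmogorov-Riesz-compactness}.

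For sufficiency ($\Leftarrow$), assume $K$ is pointwise bounded and satisfies \eqref{eqn:Kolmogorov-Riesz-compactness}. Fixing $\e>0$, I would choose $m$ with $\norm{Q_m\al}<\e/2$ for all $\al\in K$. The truncated set $P_m(K)$ lies in the finite-dimensional span of the first $m$ coordinate vectors, and pointwise boundedness provides $M_1,\dotsc,M_m$ with $|\al_i|\leq M_i$ on $K$, so $P_m(K)$ is bounded in a finite-dimensional space and hence totally bounded by Heine--Borel. Taking a finite $(\e/2)$-net $\gamma^{(1)},\dotsc,\gamma^{(N)}$ for $P_m(K)$ (as finitely supported elements of $\ell^p$) and, for $\al\in K$, an index $k$ with $\norm{P_m\al-\gamma^{(k)}}<\e/2$, I would estimate
\[
  \norm{\al-\gamma^{(k)}}\leq\norm{Q_m\al}+\norm{P_m\al-\gamma^{(k)}}<\e,
\]
exhibiting a finite $\e$-net for $K$.

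The two directions are largely symmetric, and the genuine content is the finite-dimensional reduction. I expect the step needing the most care to be the verification that $P_m(K)$ is totally bounded in the sufficiency direction: this is exactly where pointwise boundedness is indispensable, since it is what confines the finitely many retained coordinates to a bounded box, after which the equivalence of boundedness and total boundedness in finite dimensions closes the argument.
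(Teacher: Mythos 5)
Your proof is correct. The paper does not prove this theorem itself---it quotes it as Theorem~4 of the cited Hanche-Olsen--Holden article---and your truncation argument (the projections $P_m$, $Q_m$, a finite $\e/2$-net combined with vanishing tails for necessity, and Heine--Borel on the first $m$ coordinates for sufficiency) is precisely the standard proof found in that reference, so there is nothing to add.
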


For $1\leq p < \infty$, the dual space $(\ell^p)^*$ is isometrically isomorphic to $\ell^q$;
every $\psi\in (\ell^p)^*$ can be identified with a unique
$\be\in \ell^q$ such that $\psi=\hat\be$, where
\begin{equation*}
  \hat\be (\al) = \sum_{i=1}^\infty \al_i \be_i \quad (\al\in \ell^p).
\end{equation*}

%Moreover, the operator norm of $\hat\be$ equals $\|\be\|_q$; that is,
%\begin{equation}\label{eqn:norm-beta}
%  \|\hat \be\| =
%   \sup\Bigset{\Bigabs{\sum_{i=1}^\infty \be_i\al_i}:\|\al\|_p\leq 1}
%   = \|\be\|_q.
%\end{equation}

In particular, $\ell^p$ is reflexive, when $1<p<\infty$. Note that $\ell^1$ 
is isometrically isomorphic to the dual space $(c_0)^*$, where $c_0$ 
is the closed subspace of $\ell^\infty$ consisting of sequences that converge to $0$.

\subsection*{Notation}
Throughout this paper, vector spaces are assumed to be over the complex field $\C$.
Given a Banach space $\fA$, the closed unit ball of $\fA$ is denoted by $\fA_1$, that is,
$\fA_1=\set{u\in E: \|u\| \leq 1}$. We let $\fA^*$ denote the topological dual of $\fA$.
For every $u\in \fA$, define $\hat u:\fA^*\to \C$ by $\hat u(\psi)=\psi(u)$, for $\psi\in \fA^*$. The map
$u\mapsto \hat u$ embeds $\fA$ as a closed subspace of the second dual $\fA^{**}$.
If this map is surjective, then $\fA$ is called reflexive. Given another Banach space $A$, let $\B(\fA,A)$
represent the space of bounded operators of $\fA$ to $A$. An operator $\F:\fA\to A$ is compact if $\F(\fA_1)$
is relatively compact in $A$. The subspace of compact operators is denoted by $\K(\fA,A)$.
We abbreviate $\B(\fA,\fA)$ and $\K(\fA,\fA)$ to $\B(\fA)$ and $\K(\fA)$, respectively.

\subsection*{Outline}
In Section \ref{sec:wcbs}, for Banach spaces $\E$ and $A$,
we briefly investigate basic properties of the linear operators
$\F:\fA^*\to A$ that are weak*\,continuous on bounded sets (\wcbs, in short). We show that
\wcbs operators are compact, and that the converse holds only if $\fA$ is reflexive.
Main results on the characterization of compact operators are presented in Section \ref{sec:compact}.
For $1<p\leq \infty$ and $1/p+1/q=1$, we prove that $\F:\ell^q\to A$ is compact if and only if
there exists a sequence $\a=(a_1,a_2,\dotsc)$ in $A$ such that
\begin{enumerate}[(1)]
  \item  the set $\set{(\phi(a_1),\phi(a_2),\dotsc):\phi\in A^*_1}$ is totally bounded in $\ell^p$,
  \item $\F(\be)=\sum_{i=1}^\infty \be_ia_i$, for $\be=(\be_1,\be_2,\dotsc)$ in $\ell^q$.
\end{enumerate}

In Section \ref{sec:C(Omega)}, we specifically discuss the case of $A=\c(\Om)$, where
$\Om$ is a compact Housdorff space. We show that, for a sequence $F=(f_1,f_2,\dotsc)$ in $\c(\Om)$,
the following are equivalent;
\begin{enumerate}[(1)]
  \item the operator $\F_F:\ell^q\to \c(\Om)$, $\F_F(\be) = \sum_{i=1}^\infty \be_i f_i$, is compact,
  \item the function $F:\Om\to\ell^p$, $F(s)= (f_1(s),f_2(s),\dotsc)$, is continuous,
  \item the image set $F(\Om)=\set{F(s):s\in\Om}$ is totally bounded in $\ell^p$.
\end{enumerate}

In Section \ref{sec:B(H)}, we discuss the special case of $\cA=\B(\H)$,
where $\H$ is a Hilbert space. We prove that, for a sequence $\T=(T_1,T_2,\dotsc)$ in $\B(\H)$,
the following are equivalent;
\begin{enumerate}[(1)]
  \item the operator $\T:\ell^q\to\B(\H)$, $\T\be = \sum_{i=1}^\infty \be_i T_i$, is compact,
  \item the set $\set{\ip{\T x,y}:\|x\|=\|y\|=1}$ is totally bounded in $\ell^p$,
  \item the set $\set{\ip{\T x,x}:\|x\|=1}$ is totally bounded in $\ell^p$,
\end{enumerate}
where
  \begin{equation*}
   \ip{\T x,y} = (\ip{T_1x,y},\ip{T_2x,y},\dotsc) \quad (x,y\in \H).
  \end{equation*}

\section{Compactness and weak*\,continuity}
\label{sec:wcbs}

Let $\fA$ and $\A$ be Banach spaces.
  A linear operator $\F:\fA^*\to \A$ is called \emph{weak* continuous on bounded sets}
  (\wcbs, in short) if, for every bounded net $(\psi_\la)$ in $\fA^*$, the condition $\psi_\la\to \psi$
  in the weak*\,topology, implies that $\F(\psi_\la) \to \F(\psi)$ in $\A$.
  The space of \wcbs operators of $\fA^*$ to $A$ is denoted by $\WC(\fA^*,A)$.

It is easy to verify that $\F$ is \wcbs if and only if $\F$ is weak*\,continuous on the closed unit ball
$\fA^*_1$. For $A=\C$, we have the following interesting result; for a proof see \cite[Theorem 3.10.1]{Horvath}.

\begin{lem}[Banach-Dieudonn\'e]
\label{lem:Banach-Dieudonne}
 If a linear functional $\tau:\fA^*\to \C$ is \wcbs, then it is weak*\,continuous on the entire space
 $\fA^*$ so that $\tau=\hat u$, for some $u\in \fA$.
\end{lem}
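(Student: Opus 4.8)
The plan is to reduce the statement to two standard pieces of duality theory and one nontrivial theorem. The first standard fact is that a linear functional on $\fA^*$ is weak*\,continuous on the whole space precisely when it is evaluation $\hat u$ at some $u\in\fA$, because the dual of $(\fA^*,\sigma(\fA^*,\fA))$ is canonically $\fA$. The second is that a linear functional on a topological vector space is continuous if and only if its kernel is closed. Granting these, it suffices to upgrade the \wcbs hypothesis to the conclusion that $\ker\tau$ is weak*\,closed; the bridge from ``continuous on bounded sets'' to ``closed kernel'' is supplied by the Krein--\v{S}mulian theorem, which is where the completeness of $\fA$ enters.

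Concretely, I would argue as follows. If $\tau=0$ then $\tau=\hat 0$ and there is nothing to prove, so assume $\tau\neq0$ and set $N=\ker\tau$, a linear (hence convex) subspace of $\fA^*$. By the remark preceding the lemma, the \wcbs hypothesis says exactly that $\tau$ is weak*\,continuous on $\fA^*_1$, and by homogeneity on every dilate $n\fA^*_1$ with $n\in\N$. Each such ball is weak*\,compact by Banach--Alaoglu, so $N\cap n\fA^*_1=(\tau|_{n\fA^*_1})^{-1}(\{0\})$ is a weak*\,closed subset of the compact set $n\fA^*_1$, and therefore weak*\,closed in $\fA^*$.

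Now the Krein--\v{S}mulian theorem applies: a convex subset of the dual of a Banach space is weak*\,closed as soon as its intersection with every ball $n\fA^*_1$ is weak*\,closed. Hence $N$ is weak*\,closed. Since a linear functional with closed kernel on a topological vector space is continuous, $\tau$ is weak*\,continuous on all of $\fA^*$; and because $(\fA^*,\sigma(\fA^*,\fA))^*=\fA$, there is $u\in\fA$ with $\tau=\hat u$, which completes the argument.

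The genuinely nontrivial ingredient, and the step I expect to be the main obstacle, is the Krein--\v{S}mulian theorem, whose proof relies essentially on the completeness of $\fA$. A fully self-contained treatment would replace the citation by the standard construction that, from the weak*\,continuity of $\tau$ on each $n\fA^*_1$, extracts a sequence in $\fA$ tending to $0$ whose associated seminorm dominates $\abs{\tau}$; this gliding-hump construction is the technical heart, whereas the reductions in the preceding paragraphs are routine.
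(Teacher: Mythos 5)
Your proof is correct. All three reductions work as stated: the \wcbs hypothesis gives weak*\,continuity of $\tau$ on each ball $n\fA^*_1$ (directly, since these are bounded sets---no homogeneity argument is even needed), so $\ker\tau\cap n\fA^*_1$ is relatively closed in a weak*-compact, hence weak*-closed, set; Krein--\v{S}mulian then makes the convex set $\ker\tau$ weak*\,closed (balls of non-integer radius are handled by intersecting with a larger integer ball, as your argument implicitly does); the closed-kernel criterion applies to the locally convex weak*\,topology; and $(\fA^*,\sigma(\fA^*,\fA))^*=\fA$ produces $u$.

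As for comparison: the paper gives no proof of its own, citing instead Horv\'ath's Theorem 3.10.1, which is the Banach--Dieudonn\'e theorem proper---the identification of the finest topology on $\fA^*$ agreeing with weak* on bounded sets. Your route through Krein--\v{S}mulian is the standard textbook deduction (it appears, for instance, in Conway's \emph{A Course in Functional Analysis}, Section V.12) and is a close relative rather than a genuinely independent proof: Krein--\v{S}mulian is itself usually obtained from the Banach--Dieudonn\'e machinery, and the analytic heart in either case is the gliding-hump construction you correctly flag, producing a null sequence $(x_n)$ in $\fA$ with $\abs{\tau(\psi)}\leq\sup_n\abs{\psi(x_n)}$, after which Hahn--Banach through $c_0$ writes $\tau=\hat u$ with $u=\sum_n a_n x_n$, $(a_n)\in\ell^1$. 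What your route buys is a short, soft argument quoting one named theorem plus two routine duality facts, in place of the paper's appeal to the stronger topological statement.
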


Indeed, the lemma states that $\WC(\fA^*,\C)=\fA$ for every Banach space $\fA$.
Another consequence of the lemma is the following: if $\F:\fA^*\to \A$ is \wcbs, then
the composition $\phi\circ \F$, for every $\phi\in \A^*$, is weak*\,continuous on $\fA^*$,
so that $\phi\circ \F = \hat u$, for some $u\in \fA$. The converse, however, does not hold. That is,
$\F$ may fail to be \wcbs even if $\phi\circ \F$ is weak*\,continuous, for every $\phi\in A^*$;
see Example \ref{exa:T-is-not-wcbs-yet}.

We will use the following lemma several times; see \cite[Lemma 2.5]{Abtahi-RM}.

\begin{lem}\label{lem:w*-then-uniform}
 Let $(\psi_\la)$ be a bounded net in $\E^*$. If $\psi_\la\to \psi$ in the weak*\,topology,
 then $\psi_\la\to \psi$ uniformly on totally bounded sets in $\E$.
\end{lem}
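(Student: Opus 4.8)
The plan is to run a standard three-term ($3\e$) estimate, using total boundedness to reduce uniform convergence on the whole set to ordinary pointwise (weak*) convergence at finitely many points, and then exploiting directedness of the net to merge the resulting finitely many thresholds into a single one.

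First I would record the quantitative ingredients. Since the net $(\psi_\la)$ is bounded, set $M=\sup_\la\norm{\psi_\la}<\infty$; because the norm on $\E^*$ is a supremum of weak*\,continuous functionals $\psi\mapsto\abs{\psi(u)}$ ($\norm{u}\leq 1$), it is weak*\,lower semicontinuous, so the limit obeys $\norm{\psi}\leq\liminf_\la\norm{\psi_\la}\leq M$ as well. We may assume $M>0$, the case $M=0$ being trivial.

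Next I would fix a totally bounded set $S\subseteq\E$ and $\e>0$, and put $\delta=\e/(4M)$. By total boundedness there is a finite set $\set{u_1,\dotsc,u_n}\subseteq\E$ such that every $u\in S$ satisfies $\norm{u-u_j}<\delta$ for some $j$. Fixing such a $j$ and inserting $\pm\psi_\la(u_j)$ and $\pm\psi(u_j)$ gives
\begin{equation*}
  \abs{\psi_\la(u)-\psi(u)} \leq \norm{\psi_\la}\,\norm{u-u_j} + \abs{\psi_\la(u_j)-\psi(u_j)} + \norm{\psi}\,\norm{u_j-u} \leq \tfrac{\e}{2} + \abs{\psi_\la(u_j)-\psi(u_j)},
\end{equation*}
where the two outer terms are each bounded by $M\delta=\e/4$. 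The remaining term is handled pointwise: weak*\,convergence means $\psi_\la(u_j)\to\psi(u_j)$ for each of the finitely many indices $j$, so there are $\la_1,\dotsc,\la_n$ with $\abs{\psi_\la(u_j)-\psi(u_j)}<\e/2$ for $\la\geq\la_j$.

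The only step that genuinely requires care — and the reason the statement is phrased for nets rather than sequences — is the passage from these $n$ separate thresholds to a uniform one. Since the index set is directed, it contains an upper bound $\la_0$ of $\la_1,\dotsc,\la_n$, and for every $\la\geq\la_0$ all $n$ estimates hold at once. Combining with the displayed inequality yields $\abs{\psi_\la(u)-\psi(u)}<\e$ for all $\la\geq\la_0$ and \emph{all} $u\in S$ simultaneously, i.e. $\sup_{u\in S}\abs{\psi_\la(u)-\psi(u)}\leq\e$, which is precisely uniform convergence of $(\psi_\la)$ to $\psi$ on $S$.
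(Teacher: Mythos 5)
Your proof is correct and complete: it is the standard finite-$\delta$-net plus $3\e$ argument, including the two details that genuinely need checking --- the bound $\norm{\psi}\leq M$ via weak*-lower semicontinuity of the norm (needed to control the third term), and the use of directedness to merge the finitely many pointwise thresholds into one. Note that the paper itself supplies no proof of this lemma, citing instead \cite[Lemma 2.5]{Abtahi-RM}, so your argument serves as a self-contained substitute for that reference and is surely the same canonical argument.
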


The following characterization of \wcbs operators is very useful in our discussion.
Although proofs can be found in the literature, we provide one for the reader's convenience.

\begin{prop}\label{prop:dual-mapping}
  An operator $\F:\fA^*\to \A$ is
  \wcbs if and only if there is an operator $\G:\A^*\to \fA$ which is
  \wcbs, and
  \begin{equation}\label{eqn:F-G-dual-pair}
    \psi(\G \phi) = \phi(\F \psi) \quad (\psi\in \fA^*, \phi\in \A^*).
  \end{equation}
\end{prop}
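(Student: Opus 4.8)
The plan is to realize $\G$ as a \emph{pre-adjoint} of $\F$: for each $\phi\in\A^*$ the composite $\phi\circ\F$ is a scalar functional on $\fA^*$, and $\G\phi$ will be the element of $\fA$ that represents it. The single fact driving both implications is that a \wcbs operator is compact: since being \wcbs is equivalent to weak*\,continuity on the closed unit ball, and that ball is weak*\,compact by Banach--Alaoglu, the norm-image of the ball is compact, hence totally bounded. Thus $\F(\fA^*_1)$ and, in the converse direction, $\G(\A^*_1)$ are totally bounded, which is precisely what is needed to upgrade weak*\,convergence to norm convergence through Lemma \ref{lem:w*-then-uniform}.

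For the forward implication I would fix $\phi\in\A^*$ and set $\tau_\phi=\phi\circ\F$. Because $\F$ is \wcbs and $\phi$ is norm-continuous, $\tau_\phi:\fA^*\to\C$ is \wcbs, so the Banach--Dieudonn\'e lemma (Lemma \ref{lem:Banach-Dieudonne}) yields a unique $u\in\fA$ with $\tau_\phi=\hat u$; define $\G\phi=u$. Then $\psi(\G\phi)=\hat u(\psi)=\tau_\phi(\psi)=\phi(\F\psi)$ for all $\psi$, which is \eqref{eqn:F-G-dual-pair}, and the uniqueness in Banach--Dieudonn\'e makes $\phi\mapsto\G\phi$ linear. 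Boundedness is immediate from $\norm{\G\phi}=\norm{\tau_\phi}=\norm{\phi\circ\F}\leq\norm{\phi}\,\norm{\F}$.

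The substantive part is verifying that $\G$ is \wcbs. Given a bounded net $\phi_\mu\to\phi$ in the weak*\,topology of $\A^*$, I would compute, using \eqref{eqn:F-G-dual-pair} and the fact that the norm on $\fA$ is the supremum over $\fA^*_1$,
\begin{equation*}
  \norm{\G\phi_\mu-\G\phi}
  =\sup_{\psi\in\fA^*_1}\bigabs{\psi(\G\phi_\mu-\G\phi)}
  =\sup_{\psi\in\fA^*_1}\bigabs{(\phi_\mu-\phi)(\F\psi)}.
\end{equation*}
Since $\phi_\mu-\phi\to 0$ weak* and $\F(\fA^*_1)$ is totally bounded, Lemma \ref{lem:w*-then-uniform} gives $(\phi_\mu-\phi)\to 0$ uniformly on $\F(\fA^*_1)$, so the right-hand side tends to $0$ and $\G\phi_\mu\to\G\phi$ in norm. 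This is the step I expect to be the crux, since it is the only point where the compactness of $\F$ (not merely its \wcbs property) is genuinely used.

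The converse implication is symmetric. Assuming a \wcbs operator $\G$ satisfying \eqref{eqn:F-G-dual-pair}, the same computation with the roles of $\F,\G$ and $\psi,\phi$ interchanged gives, for a bounded net $\psi_\la\to\psi$ weak* in $\fA^*$,
\begin{equation*}
  \norm{\F\psi_\la-\F\psi}
  =\sup_{\phi\in\A^*_1}\bigabs{(\psi_\la-\psi)(\G\phi)},
\end{equation*}
and since $\G(\A^*_1)$ is totally bounded, Lemma \ref{lem:w*-then-uniform} forces the right-hand side to $0$. Hence $\F$ is \wcbs, which completes the equivalence.
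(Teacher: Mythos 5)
Your proposal is correct and takes essentially the same route as the paper: Banach--Dieudonn\'e (Lemma \ref{lem:Banach-Dieudonne}) to build $\G$ as the representing pre-adjoint, followed by Banach--Alaoglu compactness of the dual ball plus Lemma \ref{lem:w*-then-uniform} to pass from weak*\,convergence to norm convergence via the totally bounded image $\F(\fA^*_1)$ (resp.\ $\G(\A^*_1)$). The only cosmetic differences are that you spell out the linearity and boundedness of $\G$ and write out the symmetric converse explicitly, which the paper dispatches with ``the converse is proved similarly.''
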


\begin{proof}
  Suppose that $\F:\E^*\to \A$ is \wcbs. By Lemma \ref{lem:Banach-Dieudonne},
  for every $\phi\in \A^*$, the linear functional $\phi\circ \F$ is weak*\,continuous on $\E^*$
  so that $\phi\circ \F=u$, for some $u\in \E$. Define $\G:\A^*\to \E$ by $\G(\phi)=\phi\circ \F$.
  In fact, $\G=\F^*$, where $\F^*$ is the adjoint of $\F$ (e.g.\ see \cite[Theorem 4.10]{Rudin-FA}).
  We have
  \begin{equation}\label{eqn:F-G-dual-pair-in-proof}
     \psi(\G(\phi)) = \psi(\phi\circ \F) = \phi(\F(\psi)) \quad(\psi \in \E^*).
  \end{equation}

  We show that $\G$ is weak*\,continuous on $\A_1^*$. Let $(\phi_\la)$ be a net in $\A^*_1$
  that converges to $0$, in the weak*\,topology. Since $\E_1^*$ is weak*\,compact and $\F$ is
  weak*\,continuous on $\E^*_1$, the image set $\F(\E_1^*)$ is compact in $\A$.
  By Lemma \ref{lem:w*-then-uniform}, $\phi_\la \to 0$ uniformly on $\F(\E^*_1)$.
  Hence, for every $\e>0$, there is $\la_0$ such that
  \begin{equation*}
    |\psi(\G(\phi_\la))| = |\phi_\la(\F(\psi))|  \leq \e \quad (\psi\in \E_1^*, \la \geq \la_0).
  \end{equation*}
  Taking supremum over $\psi\in \E^*_1$, we get $\|\G(\phi_\la)\|\leq \e$, for $\la\geq \la_0$.
  Hence, $\G(\phi_\la)\to 0$ in $\E$ and thus $\G$ is \wcbs. The converse is proved similarly.
\end{proof}

We say that $\F$ and $\G$ form a \emph{dual pair} if they satisfy \eqref{eqn:F-G-dual-pair}.
In this case, we call $\G$ the \emph{dual mapping} of $\F$
(and $\F$ the dual mapping of $\G$).

\begin{rem}\label{rem:F-G-dual-pair}
  As it is mentioned in the proof, we get $\G=\F^*$ and thus
  \begin{enumerate}[(1)]
    \item $\|\F\|=\|\G\|$,
    \item $\F$ is compact if and only $\G$ is compact (\cite[Theorem 4.19]{Rudin-FA}).
  \end{enumerate}
\end{rem}

\begin{thm}\label{thm:wcbs-closed-subset-compact}
  $\WC(\fA^*,A)$ is a closed subspace of $\K(\fA^*,\A)$.
\end{thm}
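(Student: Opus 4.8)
The plan is to establish three things and combine them: that $\WC(\fA^*,A)$ is a linear subspace of $\B(\fA^*,A)$, that every \wcbs operator is compact (so $\WC(\fA^*,A)\subseteq\K(\fA^*,A)$), and that $\WC(\fA^*,A)$ is closed in the operator-norm topology. Since I will in any case show $\WC(\fA^*,A)\subseteq\K(\fA^*,A)$, norm-closedness in $\B(\fA^*,A)$ will a fortiori give closedness inside $\K(\fA^*,A)$.

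First I would dispatch linearity and compactness. Linearity is immediate from the definition: if $\F_1,\F_2$ are \wcbs and $\psi_\la\to\psi$ weak* along a bounded net, then $(c_1\F_1+c_2\F_2)(\psi_\la)\to c_1\F_1(\psi)+c_2\F_2(\psi)$, so $\WC(\fA^*,A)$ is a linear subspace of the bounded operators. Boundedness and compactness then come together via Banach--Alaoglu: the unit ball $\fA^*_1$ is weak*-compact, and $\F$ is weak*-to-norm continuous on $\fA^*_1$, so $\F(\fA^*_1)$ is the continuous image of a compact set, hence compact in $A$. In particular $\F(\fA^*_1)$ is bounded, so $\F\in\B(\fA^*,A)$, and it is relatively compact, so $\F\in\K(\fA^*,A)$.

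For closedness I would run the standard $\e/3$ argument, adapted to nets. Suppose $\F_n\in\WC(\fA^*,A)$ with $\|\F_n-\F\|\to 0$ for some $\F\in\B(\fA^*,A)$; the goal is to show $\F$ is \wcbs. Fix a bounded net $(\psi_\la)$ with $\|\psi_\la\|\le M$ (we may take $M>0$) and $\psi_\la\to\psi$ weak*; the weak* limit satisfies $\|\psi\|\le M$ as well, since $|\psi(u)|=\lim_\la|\psi_\la(u)|\le M\|u\|$. Given $\e>0$, choose $n$ with $\|\F-\F_n\|<\e/(3M)$, and then, using that $\F_n$ is \wcbs, choose $\la_0$ with $\|\F_n(\psi_\la)-\F_n(\psi)\|<\e/3$ for $\la\ge\la_0$. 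The triangle inequality
\[
  \|\F(\psi_\la)-\F(\psi)\|
  \le \|(\F-\F_n)(\psi_\la)\| + \|\F_n(\psi_\la)-\F_n(\psi)\| + \|(\F_n-\F)(\psi)\|
\]
bounds the two outer terms by $\|\F-\F_n\|\,M<\e/3$ each, giving $\|\F(\psi_\la)-\F(\psi)\|<\e$ for $\la\ge\la_0$. Hence $\F(\psi_\la)\to\F(\psi)$ and $\F$ is \wcbs.

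The only point needing care is the uniform control of the two outer terms, and this is exactly where the hypothesis that $(\psi_\la)$ is a \emph{bounded} net is used: it converts $\|\F-\F_n\|$ into an estimate independent of $\la$. I do not anticipate a genuine obstacle, since the statement is essentially a packaging of Banach--Alaoglu (for the inclusion $\WC(\fA^*,A)\subseteq\K(\fA^*,A)$) together with the fact that a uniform limit of continuous maps is continuous (for closedness).
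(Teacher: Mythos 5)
Your proof is correct and takes essentially the same route as the paper: the inclusion $\WC(\fA^*,A)\subseteq\K(\fA^*,A)$ via Banach--Alaoglu (the image $\F(\fA^*_1)$ of the weak*-compact ball under a weak*-to-norm continuous map is compact), and closedness as a uniform-limit-of-continuous-maps statement. The only difference is presentational: where the paper simply remarks that $\F_n\to\F$ uniformly on $\fA^*_1$ forces $\F$ to be weak*-continuous there, you spell this out as an explicit $\e/3$ estimate along bounded nets, which is a fully detailed version of the same argument.
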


\begin{proof}
   The inclusion $\WC(\fA^*,A)\subset \B(\fA^*,A)$ is obvious. By the Banach-Alaoglu theorem,
   $\fA^*_1$ is weak*\,compact. If $\F\in\WC(\fA^*,A)$ then $\F(\fA^*_1)$ is a compact set,
   meaning that $\F$ is a compact operator. Hence, $\WC(\fA^*,A)\subset \K(\fA^*,A)$.

   Assume that $(\F_n)$ is a sequence in $\WC(\fA^*,A)$ and that $\F_n\to \F$, with respect to the
   operator norm. This implies that $\F_n\to \F$ uniformly on $\fA^*_1$, and thus
   $\F$ is weak*\,continuous on $\fA^*_1$. Therefore, $\F\in \WC(\fA^*,A)$.
\end{proof}

The following example shows that compact operators may fail to be \wcbs.

\begin{ex}
  Let $\fA=c_0$ so that $\fA^*=\ell^1$. Let $\tau:\ell^1\to\C$ be the bounded linear
  functional induced by the element $\be=(1,1,\dotsc)$ of $\ell^\infty$;
  that is, $\tau(\al_1,\al_2,\dotsc) = \al_1+\al_2+\dotsb$. Then $\tau$ is compact.
  However, $\tau$ is not \wcbs, for $\be\notin c_0$; see Lemma \ref{lem:Banach-Dieudonne}.
\end{ex}

In the presence of dual mappings, compact operators are \wcbs. To see this, let
$\G:A^*\to \fA$ be the dual mapping of a compact operator $\F:\fA^*\to A$. Since $\F$ is compact,
the image set $\F(\fA^*_1)$ is totally bounded in $A$. The argument following \eqref{eqn:F-G-dual-pair-in-proof}
in the proof of Proposition \ref{prop:dual-mapping}, applies to $\G$, showing that
$\G$ is \wcbs. Now, by Proposition \ref{prop:dual-mapping}, $\F$ is \wcbs.
Below, we formally state this observation.

\begin{thm}\label{thm:wcbs-iff-compact}
  Let an operator $\F:\fA^*\to A$ admit a dual mapping satisfying \eqref{eqn:F-G-dual-pair}.
  If $\F$ is compact then $\F$ is \wcbs.
\end{thm}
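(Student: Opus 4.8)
The plan is to push the compactness hypothesis across the dual pairing onto the dual mapping $\G:\A^*\to\fA$ and then quote Proposition \ref{prop:dual-mapping} in its converse direction. The first observation is that compactness of $\F$ means the image $\F(\fA^*_1)$ of the closed unit ball is relatively compact, hence totally bounded, in $\A$. This totally bounded set is precisely what is needed to turn weak*\,convergence into uniform convergence via Lemma \ref{lem:w*-then-uniform}, and it is the only place where the hypothesis on $\F$ (which is absent from the general Proposition \ref{prop:dual-mapping}) will enter.

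Next I would verify directly that $\G$ is \wcbs, i.e. weak*\,continuous on $\A^*_1$. Given a net $(\phi_\la)$ in $\A^*_1$ with $\phi_\la\to 0$ weak*, Lemma \ref{lem:w*-then-uniform} gives $\phi_\la\to 0$ uniformly on the totally bounded set $\F(\fA^*_1)$; thus for every $\e>0$ there is $\la_0$ with $|\phi_\la(\F\psi)|\leq\e$ for all $\psi\in\fA^*_1$ and all $\la\geq\la_0$. Rewriting through the pairing \eqref{eqn:F-G-dual-pair} as $\phi_\la(\F\psi)=\psi(\G\phi_\la)$ and taking the supremum over $\psi\in\fA^*_1$ yields $\|\G\phi_\la\|\leq\e$ for $\la\geq\la_0$. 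Hence $\G\phi_\la\to 0$ in $\fA$, so $\G$ is \wcbs. This is exactly the estimate already carried out for $\G$ in the proof of Proposition \ref{prop:dual-mapping}, the only difference being that here the total boundedness of $\F(\fA^*_1)$ comes directly from the compactness of $\F$ rather than from its weak*\,continuity.

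Finally, since $\G$ is \wcbs and $(\F,\G)$ satisfy \eqref{eqn:F-G-dual-pair}, the converse direction of Proposition \ref{prop:dual-mapping} immediately gives that $\F$ is \wcbs, completing the argument. I do not expect a genuine obstacle here: the entire content is the symmetry of the dual pairing, which lets the earlier uniform-convergence estimate be reused once compactness of $\F$ supplies the totally bounded set. The one point deserving care is to note that it is total boundedness of the \emph{image} $\F(\fA^*_1)$, and not mere boundedness of the net $(\phi_\la)$ in $\A^*$, that drives the estimate; this is the single ingredient by which the compactness of $\F$ upgrades the general equivalence of Proposition \ref{prop:dual-mapping} into the implication claimed.
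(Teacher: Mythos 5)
Your proposal is correct and follows essentially the same route as the paper: you extract total boundedness of $\F(\fA^*_1)$ from compactness, reuse the uniform-convergence estimate from the proof of Proposition \ref{prop:dual-mapping} (via Lemma \ref{lem:w*-then-uniform} and the pairing \eqref{eqn:F-G-dual-pair}) to show $\G$ is \wcbs, and then invoke the converse direction of Proposition \ref{prop:dual-mapping}. The paper's proof is exactly this argument, stated more tersely.
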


The condition that an operator $\F:\fA^*\to A$ admits a dual mapping may fail to hold.
Indeed, we have the following characterization of this property.

\begin{prop}\label{prop:reflexive-iff}
  For a Banach space $\fA$, the following are equivalent;
  \begin{enumerate}[$(1)$]
    \item \label{item:fA-is-reflexive}
          $\fA$ is reflexive,
    \item \label{item:for-any-BS}
          every $\F\in\B(\fA^*,A)$ admits a dual mapping, for any Banach space $A$,
    \item \label{item:for-finite-dim-BS}
          every $\F\in\B(\fA^*,A)$ admits a dual mapping, for any finite dimensional Banach space $A$.
  \end{enumerate}
\end{prop}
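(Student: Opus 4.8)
The plan is to prove the cycle $(1)\Rightarrow(2)\Rightarrow(3)\Rightarrow(1)$, noting that $(2)\Rightarrow(3)$ is immediate because a finite-dimensional space is in particular a Banach space. The common mechanism behind the two substantive implications is a restatement of the pairing \eqref{eqn:F-G-dual-pair} in terms of the canonical embedding $\fA\hookrightarrow\fA^{**}$. Given $\F\in\B(\fA^*,A)$ and $\phi\in A^*$, the functional $\phi\circ\F$ belongs to $\fA^{**}$, and the requirement $\psi(\G\phi)=\phi(\F\psi)$ says exactly that $\wh{\G\phi}=\phi\circ\F$ as elements of $\fA^{**}$. Since $\fA$ embeds injectively into $\fA^{**}$, this identity determines $\G\phi\in\fA$ uniquely when a solution exists, and a solution in $\fA$ exists precisely when $\phi\circ\F$ lies in the canonical image of $\fA$. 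So I would first record the clean reformulation: \emph{$\F$ admits a dual mapping if and only if $\phi\circ\F$ lies in the canonical image of $\fA$ in $\fA^{**}$ for every $\phi\in A^*$}. The resulting $\G$ is then automatically bounded, since it agrees with the adjoint $\F^*$.

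For $(1)\Rightarrow(2)$, suppose $\fA$ is reflexive, so that $\fA=\fA^{**}$. Then for any Banach space $A$ and any $\F\in\B(\fA^*,A)$, every functional $\phi\circ\F\in\fA^{**}$ automatically lies in $\fA$; defining $\G\phi$ to be the unique element of $\fA$ with $\wh{\G\phi}=\phi\circ\F$ produces a dual mapping satisfying \eqref{eqn:F-G-dual-pair} by construction.

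The crucial implication is $(3)\Rightarrow(1)$, and here I would specialize to the one-dimensional space $A=\C$. Then $A^*=\C$ and $\B(\fA^*,\C)$ is exactly the second dual $\fA^{**}$. For such an $\F$, a dual mapping $\G:\C\to\fA$ is determined by the single element $u:=\G(1)\in\fA$, and \eqref{eqn:F-G-dual-pair} with $\phi=1$ reads $\psi(u)=\F(\psi)$ for all $\psi\in\fA^*$; that is, $\F=\hat u$ lies in the canonical image of $\fA$. Since hypothesis $(3)$ guarantees that \emph{every} $\F\in\B(\fA^*,\C)=\fA^{**}$ admits such a dual mapping, it follows that every element of $\fA^{**}$ has the form $\hat u$ with $u\in\fA$; hence the canonical embedding is surjective and $\fA$ is reflexive.

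The argument is short, and the only real idea is this reduction to scalars: the one-dimensional test space $A=\C$ already detects any failure of reflexivity, so no genuine obstacle arises once \eqref{eqn:F-G-dual-pair} is read as membership of $\phi\circ\F$ in the canonical image of $\fA$. The point requiring care is the bookkeeping with the embedding $\fA\hookrightarrow\fA^{**}$, making sure that ``admits a dual mapping'' is faithfully translated into the statement that each $\phi\circ\F$ lies in the canonical image of $\fA$; for the scalar case this image is exactly the set of functionals described in Lemma \ref{lem:Banach-Dieudonne}.
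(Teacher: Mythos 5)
Your proof is correct and takes essentially the same route as the paper: for $(1)\Rightarrow(2)$ you use reflexivity to define $\G\phi$ as the unique $u\in\fA$ with $\hat u=\phi\circ\F$, and for $(3)\Rightarrow(1)$ you test with the one-dimensional space $A=\C$, where a dual mapping $\G:\C\to\fA$ is determined by $u=\G(1)$ and the pairing \eqref{eqn:F-G-dual-pair} forces every element of $\B(\fA^*,\C)=\fA^{**}$ to equal $\hat u$, giving surjectivity of the canonical embedding. Your preliminary reformulation (``$\F$ admits a dual mapping iff each $\phi\circ\F$ lies in the canonical image of $\fA$'') is simply a more explicit packaging of the identical argument.
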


\begin{proof}
  \eqref{item:fA-is-reflexive} \FI\ \eqref{item:for-any-BS}.
  Let $\F \in \B(\fA^*,A)$. For every $\phi\in A^*$, the composition
  $\phi\circ \F$ is a bounded linear functional. Since $\fA$ is reflexive, there is
  $u\in \fA$ such that $\phi\circ \F = \hat u$. Define $\G:A^*\to \fA$ by
  $\G(\phi)=\phi\circ \F$. Then $\psi(\G\phi)=\phi(\F\psi)$ for all
  $\phi\in A^*$ and $\psi\in \fA^*$. This means that $\G$ is the dual mapping of $\F$.

  \eqref{item:for-any-BS} \FI\ \eqref{item:for-finite-dim-BS}. Clear.

  \eqref{item:for-finite-dim-BS} \FI\ \eqref{item:fA-is-reflexive}.
  We show that if $\tau:\fA^*\to\C$ is a bounded linear functional
  then $\tau=\hat u$, for some $u\in \fA$. By the assumption, $\tau$ admits
  a dual mapping $\eta:\C\to\fA$. Let $u=\eta(1)$, so that $\eta(\al)=\al u$, for all $\al\in \C$.
  By \eqref{eqn:F-G-dual-pair}, we get $\psi(u) = \psi(\eta(1)) = \tau(\psi)$, for all $\psi\in \fA^*$. Hence $\tau=\hat u$.
\end{proof}

%On reflexive spaces, \wcbs operators coincide with compact operators.

\begin{cor}\label{cor:fA-is-reflexive-iff-W=B}
  For a Banach space $\fA$, the following are equivalent;
  \begin{enumerate}[$(1)$]
    \item $\fA$ is reflexive,
    \item $\WC(\fA^*,A)=\K(\fA^*,A)$, for any Banach space $A$,
    \item $\WC(\fA^*,A)=\B(\fA^*,A)$, for any finite-dimensional Banach space $A$.
  \end{enumerate}
\end{cor}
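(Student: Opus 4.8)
The plan is to prove the cycle $(1)\Rightarrow(2)\Rightarrow(3)\Rightarrow(1)$, assembling each implication from results already established in this section; no new machinery is needed. For $(1)\Rightarrow(2)$, Theorem~\ref{thm:wcbs-closed-subset-compact} already supplies the inclusion $\WC(\fA^*,A)\subseteq\K(\fA^*,A)$ for every $A$, so only the reverse inclusion uses reflexivity. Given a compact operator $\F\in\K(\fA^*,A)$, I would apply Proposition~\ref{prop:reflexive-iff}: reflexivity of $\fA$ guarantees that $\F$ admits a dual mapping satisfying \eqref{eqn:F-G-dual-pair}. Theorem~\ref{thm:wcbs-iff-compact} then converts compactness into the \wcbs property, giving $\K(\fA^*,A)\subseteq\WC(\fA^*,A)$ and hence equality.

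For $(2)\Rightarrow(3)$, the key observation is that when $A$ is finite-dimensional every bounded operator is automatically compact: the image $\F(\fA^*_1)$ is a bounded---and therefore relatively compact---subset of the finite-dimensional space $A$. Thus $\K(\fA^*,A)=\B(\fA^*,A)$ for such $A$, and combining this with $(2)$ yields $\WC(\fA^*,A)=\K(\fA^*,A)=\B(\fA^*,A)$.

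For $(3)\Rightarrow(1)$, I would specialize to the one-dimensional target $A=\C$. On the one hand, $\B(\fA^*,\C)$ is by definition the dual of $\fA^*$, that is $\fA^{**}$; on the other hand, the Banach--Dieudonn\'e lemma (Lemma~\ref{lem:Banach-Dieudonne}) identifies $\WC(\fA^*,\C)$ with the canonical image $\{\hat u:u\in\fA\}$ of $\fA$ in $\fA^{**}$. The hypothesis $\WC(\fA^*,\C)=\B(\fA^*,\C)$ therefore says that every element of $\fA^{**}$ has the form $\hat u$, i.e.\ that the canonical embedding $\fA\hookrightarrow\fA^{**}$ is surjective, which is exactly reflexivity.

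Since each implication is either a direct citation or a one-line observation, the corollary is in essence a repackaging of Proposition~\ref{prop:reflexive-iff}, Theorem~\ref{thm:wcbs-iff-compact}, and Theorem~\ref{thm:wcbs-closed-subset-compact}. The only step carrying a genuine idea is $(3)\Rightarrow(1)$, where one must recognize that testing against the single scalar target $A=\C$ already detects reflexivity, via the competing descriptions of $\WC(\fA^*,\C)$ and $\B(\fA^*,\C)$ as $\fA$ and $\fA^{**}$. I expect that identification to be the only real point of the argument, and it becomes transparent once Banach--Dieudonn\'e is in hand.
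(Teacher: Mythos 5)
Your proposal is correct, and it is essentially the paper's own argument unpacked: the paper proves the corollary by simply citing Theorem~\ref{thm:wcbs-iff-compact} and Proposition~\ref{prop:reflexive-iff} (with Theorem~\ref{thm:wcbs-closed-subset-compact} supplying the inclusion $\WC(\fA^*,A)\subseteq\K(\fA^*,A)$), which is exactly how you assemble $(1)\Rightarrow(2)\Rightarrow(3)$. Your $(3)\Rightarrow(1)$ step, invoking Lemma~\ref{lem:Banach-Dieudonne} directly with $A=\C$ to set $\WC(\fA^*,\C)=\fA$ against $\B(\fA^*,\C)=\fA^{**}$, is only cosmetically different from the paper's route through the dual-mapping argument in the proof of Proposition~\ref{prop:reflexive-iff}, where the same scalar-target identification is carried out in dual-pair language.
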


\begin{proof}
  It follows from Theorem \ref{thm:wcbs-iff-compact} and Proposition \ref{prop:reflexive-iff}.
\end{proof}

If $\fA=\ell^p$, $1<p<\infty$, then $\fA^*=\ell^q$, where $1/p+1/q=1$, and $\fA$ is reflexive.
In this case, $\WC(\ell^q,A)=\K(\ell^q,A)$, for all Banach space $A$. If $\fA=c_0$ then $\fA^*=\ell^1$ and
$\fA$ is not reflexive. By corollary above, $\WC(\ell^1,A)$ is a proper subspace of $\K(\ell^1,A)$,
for some Banach space $A$. In fact, this is the case for every Banach space $A$. To justify this,
let $\tau:\ell^1\to\C$ be a bounded linear functional that is not weak*\,continuous. Given a Banach space $A$,
take a nonzero element $a\in A$ and define $\F:\ell^1\to A$ by $\F(\be) = \tau(\be)a$. Clearly,
$\F$ is a compact operator that is not \wcbs.

The following example shows that a bounded operator $\F:\fA^*\to A$ may fail to be
\wcbs, even if $\phi\circ \F$, for every $\phi\in \A^*$, is weak*\,continuous on $\fA^*$.

\begin{ex}\label{exa:T-is-not-wcbs-yet}
  Let $\H$ be a Hilbert space. Every bounded operator $T:\H \to \H$  admits a dual mapping.
  Indeed, the equation $\ip{Tx,y} = \ip{x,T^*y}$, for $x,y\in \H$, shows that
  $T$ and $T^*$ form a dual pair. Therefore, $T$ is \wcbs if and only if $T$ is compact.
  Now, let $\H$ be infinite dimensional and let $T\in\B(\H)$ be a noncompact operator.
  By Corollary \ref{cor:fA-is-reflexive-iff-W=B}, $T$ is not \wcbs.
  However, for every $y\in \H$, the composition map $x\mapsto\ip{Tx,y}$ is weak*\,continuous.
\end{ex}

\section{Compact operators on $\ell^p$-spaces}% into Banach spaces}
\label{sec:compact}
Let $A$ be a Banach space, $p \in [1,\infty]$, and $1/p+1/q=1$.
In this section, we present a characterization of compact operators $\F:\ell^q \to A$.
To begin, let $A^\N$ denote the space of $A$-valued sequences $\a=(a_1,a_2,\dotsc)$.
For $p\in[1,\infty)$, define
\begin{equation}\label{eqn:p-norm-of-a}
  \|\a\|_p = \Bigprn{\sum_{i=1}^\infty \|a_i\|^p}^{1/p}.
\end{equation}
And, for $p=\infty$, define
\begin{equation}\label{eqn:p-norm-of-a-p=infty}
  \|\a\|_\infty = \sup\set{\|a_i\|: i\in \N}.
\end{equation}
For every $\phi\in A^*$, let $\phi(\a)=(\phi(a_1),\phi(a_2),\dotsc)$, and consider the following
spaces of $A$-valued sequences;
  \begin{align*}
    c_{00}(A) & = \set{\a\in A^\N : \text{$a_i=0$ eventually}},\\
    c_0(A) & = \set{\a\in A^\N : \text{$a_i \to 0$ in $A$}},\\
    \ell^p(A) & = \set{\a\in A^\N : \|\a\|_p < \infty}, \\
    \ell^p_c(A) & = \set{\a\in A^\N : \text{the set $\set{\phi(\a):\phi\in A^*_1}$ is totally bounded in $\ell^p$}},\\
    \ell^p_b(A) & = \set{\a\in A^\N : \text{the set $\set{\phi(\a):\phi\in A^*_1}$ is bounded in $\ell^p$}}.
  \end{align*}

\noindent
It is easy to see that
\begin{enumerate}[(1)]
  \item if $A$ is finite dimensional, then $\ell^p(A)= \ell^p_c(A) = \ell^p_b(A)$,
  \item $\ell^\infty(A)=\ell^\infty_b(A)$, for any Banach space $A$ (\cite[Theorem 3.18]{Rudin-FA}).
\end{enumerate}

\begin{prop}\label{prop:inclusions}
  If $p\in[1,\infty)$ then $\ell^p(A) \subset \ell^p_c(A)$.
\end{prop}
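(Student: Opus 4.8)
The plan is to verify directly that the set $K=\set{\phi(\a):\phi\in A^*_1}$ satisfies the two conditions of Kolmogorov's compactness theorem (Theorem \ref{thm:Kolmogorov-Riesz-compactness}). The single inequality that drives everything is the uniform domination $|\phi(a_i)|\leq \norm{a_i}$, valid for every $\phi\in A^*_1$ and every $i$; this transfers all size information about $K$ to the fixed scalar sequence $(\norm{a_i}^p)$, which is summable precisely because $\a\in\ell^p(A)$.

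First I would confirm that $K$ actually lies in $\ell^p$ and is pointwise bounded. For $\phi\in A^*_1$, the domination gives $\sum_i|\phi(a_i)|^p\leq \sum_i\norm{a_i}^p=\norm{\a}_p^p<\infty$, so $\phi(\a)\in\ell^p$ with $\norm{\phi(\a)}_p\leq\norm{\a}_p$. In particular $K$ is contained in the closed ball of radius $\norm{\a}_p$ in $\ell^p$, hence is bounded and \emph{a fortiori} pointwise bounded, since each coordinate satisfies $\sup_{\phi\in A^*_1}|\phi(a_i)|\leq\norm{a_i}<\infty$.

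Next I would establish the uniform tail estimate. Because $\sum_i\norm{a_i}^p$ converges, given $\e>0$ one may choose $m\in\N$ with $\sum_{i>m}\norm{a_i}^p<\e^p$. Applying the domination term by term then yields $\sum_{i>m}|\phi(a_i)|^p\leq\sum_{i>m}\norm{a_i}^p<\e^p$ for every $\phi\in A^*_1$, so that $\bigprn{\sum_{i>m}|\phi(a_i)|^p}^{1/p}<\e$ holds uniformly over $K$. This is exactly condition \eqref{eqn:Kolmogorov-Riesz-compactness}, and combined with the pointwise boundedness already noted, Theorem \ref{thm:Kolmogorov-Riesz-compactness} concludes that $K$ is totally bounded, i.e.\ $\a\in\ell^p_c(A)$.

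I do not expect a genuine obstacle here: the entire content is the observation that the supremum over the unit ball $A^*_1$ is absorbed by the fixed summable majorant $(\norm{a_i}^p)$, so the tail condition for the whole family collapses to the tail condition for a single convergent series. The only point to keep straight is that the cut-off $m$ must be selected before, and independently of, $\phi$ — and this uniformity is exactly what the estimate $|\phi(a_i)|\leq\norm{a_i}$ guarantees.
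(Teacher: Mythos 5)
Your proposal is correct and follows essentially the same route as the paper: both verify the two conditions of Theorem \ref{thm:Kolmogorov-Riesz-compactness} for $K=\set{\phi(\a):\phi\in A^*_1}$, using the domination $|\phi(a_i)|\leq\|a_i\|$ to reduce the uniform tail estimate to the tail of the single convergent series $\sum_i\|a_i\|^p$. Your write-up is merely a bit more explicit than the paper's about why $K$ lies in $\ell^p$ and is pointwise bounded.
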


\begin{proof}
%  The inclusions $c_{00}(A) \subset \ell^p(A)$ and $\ell^p_c(A) \subset \ell^p_b(A)$
%  are obvious. We prove that $\ell^p(A) \subset \ell^p_c(A)$.
  Let $\a=(a_1,a_2,\dotsc)$ be an element of $\ell^p(A)$, and set
  $K=\set{\phi(\a):\phi\in A^*_1}$. We show that $K$ is totally bounded in $\ell^p$.
  Since $\|\a\|_p<\infty$, given $\e>0$, there is $m\in \N$ such that
  $\sum_{i > m} \|a_i\|^p < \e^p$. Hence,
  \begin{equation*}
    \Bigprn{\sum_{i > m} |\phi(a_i)|^p}^{1/p} < \e \quad (\phi\in A^*_1).
  \end{equation*}
  Hence, $K$ satisfies \eqref{eqn:Kolmogorov-Riesz-compactness}. Obviously, $K$ is bounded
  in $\ell^p$. By Theorem \ref{thm:Kolmogorov-Riesz-compactness},
  $K$ is totally bounded in $\ell^p$, meaning that $\a \in \ell^p_c(A)$.
\end{proof}

The following two examples show that, in general, the inclusions
$\ell^p(A) \subset \ell^p_c(A)\subset \ell^p_b(A)$
are proper inclusions.

\begin{ex}
  Let $A = \ell^2$ and $p=2$. For every $i\in \N$, let $a_i = e_i/\sqrt{i}$, where $\{e_1,e_2,\dotsc\}$
  is the standard unit vector basis for $\ell^2$, and set $\a=(a_1,a_2,\dotsc)$. Then $\|a_i\| = 1/\sqrt{i}$
  so that $\sum_{i=1}^\infty \|a_i\|^2$ does not converge, meaning that $\a\notin \ell^2(A)$.
  We show that $\a\in \ell^2_c(A)$. Let $\be=(\be_1,\be_2,\dotsc)$ be an element of $(\ell^2)^*=\ell^2$,
  with $\|\be\|_2 \leq 1$. Then $\hat\be(a_i) = \be_i/\sqrt{i}$, for every $i$, and thus
  \begin{equation*}
    \sum_{i>m} |\hat\be(a_i)|^2
     = \sum_{i>m} \frac{|\be_i|^2}{i} < \frac1m \sum_{i>m} |\be_i|^2 \leq \frac1m\|\be\|^2_2 \leq \frac1m.
  \end{equation*}

  Satisfying all the conditions in Theorem \ref{eqn:Kolmogorov-Riesz-compactness},
  the set $\set{\hat\be(\a):\|\be\|_2\leq 1}$ is totally bounded in $\ell^2$.
  Therefore, $\a\in\ell^2_c(A)$.
\end{ex}

\begin{ex}
  Let $A = \ell^2$ and let $\a=(e_1,e_2,\dotsc)$. Then $\a\in \ell^p_b(A)$, $p\in[1,\infty)$.
  However, %$\hat\be(a_i)=\be_i$, for every $\be=(\be_1,\be_2,\dotsc)$ in $A^*=\ell^2$. In particular, for $\be=e_n$, we get
  \begin{equation*}
    \sum_{i > m} |\hat e_n(e_i)|^p = 1 \quad (n > m).
  \end{equation*}
  This shows that $\a\notin \ell^p_c(A)$.
\end{ex}

\begin{defn}\label{dfn:normm(a)}
  Let $A$ be a Banach space and $1 \leq p\leq \infty$.
  For every $\a\in \ell^p_b(A)$, define
  \begin{equation}\label{eqn:normm{a}}
    \normm{\a}_p = \sup\set{\|\phi(\a)\|_p : \phi\in A^*, \|\phi\|\leq 1}.
  \end{equation}
\end{defn}

We will see that $\enormm_p$ is a complete norm on $\ell^p_b(A)$, and that
$\ell^p_c(A)$ is a closed subspace of $\ell^p_b(A)$. If $p\in[1,\infty)$ then
$\ell^p_c(A)=\ol{c_{00}(A)}$, where the closure is taken with respect to $\enormm_p$.
If $p=\infty$ then $\normm{\a}_\infty = \|\a\|_\infty$, and
$\ol{c_{00}(A)}=c_0(A) \subset \ell^\infty_c(A)$.
Before discussing any of these, let us present a characterization of bounded
operators on $\ell^q$, $1\leq q <\infty$.
Throughout, given an element $\a=(a_1,a_2,\dotsc)$ of $\ell^p_b(A)$,
define
   \begin{equation}\label{eqn:a-induces-Fa}
      \F_\a(\beta) = \sum_{i=1}^\infty \beta_i a_i,
   \end{equation}
for every $\beta = (\beta_1,\beta_2,\dotsc)$ in $\ell^q$.
If $\a\in\ell^1_b(A)$, the equation is well-defined for $\be\in c_0$.

\begin{thm}\label{thm:bounded-iff}
  Let $1<p \leq \infty$ and $1/p+1/q=1$.
  \begin{enumerate}[$(1)$]
    \item \label{item:every-a-indices-F:lq-to-A}
          Every $\a\in \ell^p_b(A)$ induces a bounded operator $\F_\a: \ell^q \to A$ defined by
          \eqref{eqn:a-induces-Fa} with $\|\F_\a\| = \normm{\a}_p$.
    \item \label{item:every-F-is-of-the-form-Fa}
        If $\F:\ell^q\to A$ is a bounded operator, then $\F=\F_\a$ for some $\a\in\ell^p_b(A)$.

%    \item \label{item:F-is-compact-iff-a-in-lpc}
%          $\F_\a$ is compact if and only if $\a\in \ell^p_c(A)$.
  \end{enumerate}
\end{thm}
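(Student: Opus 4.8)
The plan is to reduce both parts to the scalar duality $(\ell^q)^*=\ell^p$, which is available precisely because $p>1$ forces $1\le q<\infty$. The observation used throughout is that, for a fixed $\phi\in A^*$, the scalar functional $\phi\circ\F_\a$ on $\ell^q$ is nothing but the functional induced by the sequence $\phi(\a)=(\phi(a_i))\in\ell^p$, so that $\norm{\phi\circ\F_\a}=\norm{\phi(\a)}_p$ under the identification $(\ell^q)^*=\ell^p$.

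For part \eqref{item:every-a-indices-F:lq-to-A} I first check that $\sum_i\be_i a_i$ converges in $A$. As $A$ is complete it suffices to show the partial sums $s_n=\sum_{i=1}^n\be_i a_i$ are Cauchy, and for $m<n$ I choose by Hahn--Banach some $\phi\in A^*_1$ attaining $\norm{s_n-s_m}$; H\"older's inequality then gives
\begin{equation*}
  \norm{s_n-s_m}=\Bigabs{\sum_{i=m+1}^n\be_i\phi(a_i)}\leq\Bigprn{\sum_{i>m}|\be_i|^q}^{1/q}\norm{\phi(\a)}_p\leq\Bigprn{\sum_{i>m}|\be_i|^q}^{1/q}\normm{\a}_p,
\end{equation*}
where $\a\in\ell^p_b(A)$ is used to bound the $\ell^p$-factor by $\normm{\a}_p<\infty$ (for $p=\infty$, $q=1$, one uses the $\ell^1$--$\ell^\infty$ H\"older inequality instead). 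Since $\be\in\ell^q$ and $q<\infty$, the $\ell^q$-tail vanishes, so $(s_n)$ is Cauchy and $\F_\a\be$ is well defined. The same estimate with $m=0$ yields $\norm{\F_\a}\leq\normm{\a}_p$. For the reverse inequality I invoke the key observation: for each $\phi\in A^*_1$ one has $\norm{\phi(\a)}_p=\norm{\phi\circ\F_\a}\leq\norm{\phi}\,\norm{\F_\a}\leq\norm{\F_\a}$, and taking the supremum over $\phi\in A^*_1$ gives $\normm{\a}_p\leq\norm{\F_\a}$, so equality holds.

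For part \eqref{item:every-F-is-of-the-form-Fa} I set $a_i=\F(e_i)$, with $(e_i)$ the standard unit-vector basis of $\ell^q$, and put $\a=(a_1,a_2,\dotsc)$. Because $1\le q<\infty$, $(e_i)$ is a Schauder basis of $\ell^q$, so $\be=\sum_i\be_i e_i$ in norm and continuity of $\F$ gives $\F\be=\sum_i\be_i\F(e_i)=\F_\a\be$. To see $\a\in\ell^p_b(A)$, note that for $\phi\in A^*_1$ the functional $\phi\circ\F$ sends $e_i$ to $\phi(a_i)$, hence is identified with $\phi(\a)$ under $(\ell^q)^*=\ell^p$; therefore $\norm{\phi(\a)}_p=\norm{\phi\circ\F}\leq\norm{\F}$, and $\set{\phi(\a):\phi\in A^*_1}$ is bounded in $\ell^p$.

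The one genuinely delicate point is the convergence in part \eqref{item:every-a-indices-F:lq-to-A}: since $A$ is arbitrary there is no absolute or unconditional convergence to lean on, and it is the Hahn--Banach reduction to a scalar sum controlled by H\"older that forces the series to converge. Everything else follows mechanically from the duality $(\ell^q)^*=\ell^p$ and the Schauder basis property of $(e_i)$ in $\ell^q$ — both of which fail when $q=\infty$, which is exactly why $p>1$ is assumed.
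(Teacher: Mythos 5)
Your proof is correct and follows essentially the same route as the paper: convergence of the series via the dual-norm formula $\norm{u}=\sup_{\phi\in A^*_1}|\phi(u)|$ combined with H\"older (your Hahn--Banach norming functional is exactly the paper's supremum over $A^*_1$), the norm equality $\norm{\F_\a}=\normm{\a}_p$ via the duality $(\ell^q)^*=\ell^p$, and part (2) by setting $a_i=\F(e_i)$ and using that $(e_i)$ is a Schauder basis of $\ell^q$. The only cosmetic difference is that you treat $p=\infty$ uniformly through $(\ell^1)^*=\ell^\infty$, whereas the paper verifies $\norm{\phi(\a)}_p\leq\norm{\F}$ by an explicit finite-section computation and handles $p=\infty$ as a separate case.
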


\begin{proof}
  \eqref{item:every-a-indices-F:lq-to-A}.
  Let $\a=(a_1,a_2,\dotsc)$ be an element of $\ell^p_b(A)$. For $n\geq m \geq 1$, we get
  \begin{align}
    \Bignorm{\sum_{i=m}^n \beta_i a_i}
      & = \sup_{\phi\in A^*_1} \Bigabs{\phi\Bigprn{\sum_{i=m}^n \beta_i a_i}}
        = \sup_{\phi\in A^*_1} \Bigabs{\sum_{i=m}^n \beta_i \phi(a_i)} \label{eqn:norm(F(be))}\\
%      & \leq \sup_{\phi\in A^*_1} \Bigprn{\sum_{i=m}^n|\phi(a_i)|^p}^{1/p} \Bigprn{\sum_{i=m}^n |\beta_i|^q}^{1/q} \\
      & \leq\sup_{\phi\in A^*_1} \|\phi(\a)\|_p \Bigprn{\sum_{i=m}^n |\beta_i|^q}^{1/q}
        = \normm{\a}_p \Bigprn{\sum_{i=m}^n |\beta_i|^q}^{1/q}.
  \end{align}

  This shows that the series in \eqref{eqn:a-induces-Fa} satisfies the Cauchy criterion in the Banach space $A$, and thus it converges
  to $\F_\a(\be)$. It also implies that $\|\F_\a(\be)\| \leq \normm{\a}_p \|\be\|_q$,
  for all $\be \in \ell^q$ so that $\|\F_\a\|\leq \normm{\a}_p$. In fact, $\|\F_\a\|=\normm{\a}_p$, for
  \begin{align*}
    \|\F_\a\|
        = \sup_{\|\be\|_q \leq 1} \|\F_\a(\be)\|
       & = \sup_{\|\be\|_q \leq 1} \Bignorm{\sum_{i=1}^\infty \beta_i a_i}
       = \sup_{\|\be\|_q \leq 1} \sup_{\phi\in A^*_1} \Bigabs{\sum_{i=1}^\infty \beta_i \phi(a_i)} \\
      & = \sup_{\phi\in A^*_1} \sup_{\|\be\|_q \leq 1} \Bigabs{\sum_{i=1}^\infty \beta_i \phi(a_i)}
        = \sup_{\phi\in A^*_1} \|\phi(\a)\|_p  %\Bigprn{\sum_{i=1}^\infty |\phi(a_i)|}^{1/p}
        = \normm{\a}_p.
  \end{align*}

    \eqref{item:every-F-is-of-the-form-Fa}.
    Let $\F:\ell^q \to A$ be a bounded operator. For every $i\in\N$, let $a_i=\F(e_i)$, where
    $\{e_1,e_2,\dotsc\}$ is the standard unit vector basis, and let $\a=(a_1,a_2,\dotsc)$. We show
    that $\a\in\ell^p_b(A)$. First, let $1<p<\infty$. Given $\phi\in A^*_1$,
    for every $n\in\N$, we have
    \begin{equation}\label{eqn:a-in-lpb}
    \begin{split}
      \Bigprn{\sum_{i=1}^n |\phi(a_i)|^p}^{1/p}
        & = \sup\Bigset{\Bigabs{\sum_{i=1}^n\be_i\phi(a_i)}: \be\in \ell^q,\|\be\|_q \leq 1} \\
        & = \sup\Bigset{\Bigabs{\phi\Bigprn{\sum_{i=1}^n\be_i\F(e_i)}}: \be\in \ell^q,\|\be\|_q \leq 1} \\
        & \leq \sup\Bigset{\Bignorm{\F\Bigprn{\sum_{i=1}^n\be_ie_i}}: \be\in \ell^q,\|\be\|_q \leq 1}  \\
        & \leq \|\F\| \sup\Bigset{\Bignorm{\sum_{i=1}^n\be_ie_i}_q: \be\in \ell^q,\|\be\|_q \leq 1} \\
        & \leq \|\F\|. %\sup\set{\|\be\|_q: \be\in \ell^q,\|\be\|_q \leq 1}
          % = \|\F\|.
    \end{split}
    \end{equation}

    Therefore, $\phi(\a)\in \ell^p$ and $\|\phi(\a)\|_p \leq \|\F\|$, for all $\phi\in A^*_1$,
    showing that $\a\in \ell^p_b(A)$. In case $p=\infty$, we get $\|a_i\| = \|\F(e_i)\| \leq \|\F\|$, for $i\in\N$,
    and thus $\a\in \ell^\infty(A)$.

    In both cases, since $\F(e_i) = a_i = \F_\a(e_i)$, for every $i\in \N$,
    we get $\F=\F_\a$.
\end{proof}

We are now in a position to present a characterization of compact operators.

\begin{thm}\label{thm:compact-iff}
  Let $1<p\leq \infty$ and $1/p+1/q=1$. An operator $\F:\ell^q \to A$ is compact if and only if $\F=\F_\a$,
  for some $\a\in\ell^p_c(A)$.
\end{thm}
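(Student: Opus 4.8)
The plan is to reduce everything to a statement about the adjoint of $\F_\a$ and then invoke Schauder's theorem. By Theorem~\ref{thm:bounded-iff}, every bounded $\F:\ell^q\to A$ has the form $\F_\a$ for some $\a\in\ell^p_b(A)$, while every $\a\in\ell^p_c(A)\subset\ell^p_b(A)$ induces a bounded $\F_\a$; so the theorem reduces to showing that, for $\a\in\ell^p_b(A)$, the operator $\F_\a$ is compact if and only if $\a\in\ell^p_c(A)$. The guiding observation is that the set $\set{\phi(\a):\phi\in A^*_1}$ appearing in the definition of $\ell^p_c(A)$ is precisely the image of the unit ball $A^*_1$ under the adjoint $\F_\a^*$.

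To make this precise, I would first recall that $1\leq q<\infty$, so $(\ell^q)^*$ is isometrically isomorphic to $\ell^p$ (with $p=\infty$ when $q=1$), and then compute $\F_\a^*:A^*\to(\ell^q)^*\cong\ell^p$ directly: for $\phi\in A^*$ and $\be\in\ell^q$, $(\F_\a^*\phi)(\be)=\phi(\F_\a\be)=\sum_{i=1}^\infty\be_i\phi(a_i)$, which under the identification $(\ell^q)^*\cong\ell^p$ is exactly the functional induced by the sequence $\phi(\a)=(\phi(a_1),\phi(a_2),\dotsc)\in\ell^p$. Hence $\F_\a^*\phi=\phi(\a)$, and therefore $\F_\a^*(A^*_1)=\set{\phi(\a):\phi\in A^*_1}$.

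With this identity in hand, both directions follow at once. By Schauder's theorem (\cite[Theorem~4.19]{Rudin-FA}, already used in Remark~\ref{rem:F-G-dual-pair}), $\F_\a$ is compact if and only if $\F_\a^*$ is compact; and since $\ell^p$ is complete, $\F_\a^*$ is compact if and only if the image $\F_\a^*(A^*_1)$ is totally bounded in $\ell^p$ (the two notions coinciding by completeness), which is by definition the statement $\a\in\ell^p_c(A)$. This argument is uniform in $p$ and, unlike the earlier $\WC$--reflexivity route, needs no reflexivity of the predual.

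For $p<\infty$ I would also record a concrete proof of the implication $\a\in\ell^p_c(A)\Rightarrow\F_\a$ compact exhibiting $\F_\a$ as a norm-limit of finite-rank operators, thereby connecting with the Kolmogorov criterion. Truncating $\a$ to $\a^{(m)}=(a_1,\dotsc,a_m,0,0,\dotsc)$ gives finite-rank (hence compact) operators $\F_{\a^{(m)}}$, and Theorem~\ref{thm:bounded-iff}(1) yields $\|\F_\a-\F_{\a^{(m)}}\|=\normm{\a-\a^{(m)}}_p=\sup_{\phi\in A^*_1}\bigprn{\sum_{i>m}|\phi(a_i)|^p}^{1/p}$, which tends to $0$ by the tail condition \eqref{eqn:Kolmogorov-Riesz-compactness} of Theorem~\ref{thm:Kolmogorov-Riesz-compactness} applied to the totally bounded set $\set{\phi(\a):\phi\in A^*_1}$. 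I expect the only genuine obstacle to be the endpoint $p=\infty$: Kolmogorov's theorem is stated only for $p<\infty$, so this truncation argument breaks down there, and one must fall back on the adjoint/Schauder route together with the identification $(\ell^1)^*=\ell^\infty$, which handles $p=\infty$ without modification.
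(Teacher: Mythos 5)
Your proof is correct and takes essentially the same route as the paper: the paper likewise defines $\G_\a(\phi)=\phi(\a)$, verifies via the duality computation that $\G_\a=\F_\a^*$, and invokes Schauder's theorem (\cite[Theorem 4.19]{Rudin-FA}) to conclude that $\F_\a$ is compact if and only if $\set{\phi(\a):\phi\in A^*_1}$ is totally bounded in $\ell^p$, i.e.\ $\a\in\ell^p_c(A)$. Your supplementary truncation argument for $p<\infty$, including the correct observation that it fails at $p=\infty$, is also sound and in effect reproduces the paper's Proposition~\ref{prop:a-in-lc-iff-an-to-a}.
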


\begin{proof}
  By Theorem \ref{thm:bounded-iff}, $\F=\F_\a$, for some $\a\in \ell^p_b(A)$.
  Define $\G_\a:A^*\to \ell^p$ by $\G_\a(\phi)=\phi(\a)$.
  Then, for every $\be\in \ell^q$ and $\phi\in A^*$,
  \begin{equation*}
    \hat\be(\G_\a(\phi))
     = \hat\be(\phi(\a))
     = \sum_{i=1}^\infty \be_i \phi(a_i)
     =  \phi\Bigprn{\sum_{i=1}^\infty \be_i a_i}
     = \phi(\F_\a(\be)).
  \end{equation*}

  Therefore, $\G_\a=\F_\a^*$, the adjoint of $\F_\a$. By \cite[Theorem 4.19]{Rudin-FA},
  $\F_\a$ is compact if and only if $\G_\a$ is compact, equivalently,
  $\G_\a(A^*_1)=\set{\phi(\a) : \phi\in A^*_1}$ is totally bounded in $\ell^p$.
  This means that $\a\in\ell^p_c(A)$.
\end{proof}

We now discuss the case of $p=1$.

\begin{thm}\label{thm:bounded-compact-p=1}
  Every $\a\in\ell^1_b(A)$ induces a bounded operator $\F_\a:c_0\to A$,
  defined by \eqref{eqn:a-induces-Fa}, with $\|\F_\a\|=\|\a\|_1$. Conversely, if
  $\F:c_0 \to A$ is a bounded operator then $\F=\F_\a$, for some $\a\in\ell^1_b(A)$.
  Moreover, $\F$ is compact if and only if $\a\in\ell^1_c(A)$.
\end{thm}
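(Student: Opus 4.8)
The plan is to read Theorem \ref{thm:bounded-compact-p=1} as the $p=1$ counterpart of Theorems \ref{thm:bounded-iff} and \ref{thm:compact-iff}: the only structural change is that the domain $\ell^q$ is replaced by $c_0$, whose dual is $\ell^1$ (rather than $\ell^\infty$), and this is exactly why one works with $c_0$ here. I would carry out three steps parallel to the earlier proofs: (i) each $\a\in\ell^1_b(A)$ yields a well-defined bounded operator $\F_\a$ via \eqref{eqn:a-induces-Fa} with $\|\F_\a\|=\normm{\a}_1$; (ii) every bounded $\F\colon c_0\to A$ is of this form; (iii) compactness is detected on the adjoint exactly as in the proof of Theorem \ref{thm:compact-iff}.

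For step (i), fix $\a\in\ell^1_b(A)$, so that $\normm{\a}_1<\infty$ by Definition \ref{dfn:normm(a)}. The decisive convergence estimate exploits $\be\in c_0$: for $n\geq m$,
\[
  \Bignorm{\sum_{i=m}^n \be_i a_i}
  = \sup_{\phi\in A^*_1}\Bigabs{\sum_{i=m}^n \be_i \phi(a_i)}
  \leq \Bigprn{\sup_{i\geq m} |\be_i|}\,\normm{\a}_1,
\]
and since $\be\in c_0$ the factor $\sup_{i\geq m}|\be_i|$ tends to $0$; hence the series satisfies the Cauchy criterion in $A$ and $\F_\a(\be)$ is well defined. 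The same inequality with $m=1$ gives $\|\F_\a\|\leq\normm{\a}_1$, and the reverse inequality follows, just as in Theorem \ref{thm:bounded-iff}, by interchanging the two suprema and using that for each fixed $\phi$ the value $\sup\set{|\sum_i \be_i\phi(a_i)|:\be\in c_0,\ \|\be\|_\infty\leq 1}$ equals $\|\phi(\a)\|_1$, the norm of $\phi(\a)$ as an element of $\ell^1=(c_0)^*$. Thus $\|\F_\a\|=\normm{\a}_1$ (the quantity appearing in Theorem \ref{thm:bounded-iff}).

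For the converse in step (ii), put $a_i=\F(e_i)$ and $\a=(a_1,a_2,\dotsc)$. To check $\a\in\ell^1_b(A)$, fix $\phi\in A^*_1$ and $n\in\N$, choose unimodular scalars $\be_i$ with $\be_i\phi(a_i)=|\phi(a_i)|$, and observe that $\sum_{i=1}^n \be_i e_i$ lies in $c_0$ with sup-norm at most $1$, so that
\[
  \sum_{i=1}^n |\phi(a_i)|
  = \phi\Bigprn{\F\Bigprn{\sum_{i=1}^n \be_i e_i}}
  \leq \|\F\|.
\]
Letting $n\to\infty$ gives $\|\phi(\a)\|_1\leq\|\F\|$ for all $\phi\in A^*_1$, i.e.\ $\normm{\a}_1\leq\|\F\|<\infty$ and $\a\in\ell^1_b(A)$; since $\F$ and $\F_\a$ agree on every $e_i$ they agree on the finitely supported sequences, which are dense in $c_0$, and both are continuous, so $\F=\F_\a$. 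For step (iii), define $\G_\a\colon A^*\to\ell^1$ by $\G_\a(\phi)=\phi(\a)$ (bounded, since $\|\phi(\a)\|_1\leq\normm{\a}_1\|\phi\|$). For $\phi\in A^*$ and $\be\in c_0$,
\[
  \hat\be(\G_\a(\phi)) = \hat\be(\phi(\a))
  = \sum_{i=1}^\infty \be_i\phi(a_i) = \phi(\F_\a(\be)),
\]
so $\G_\a=\F_\a^*$ under the identification $(c_0)^*=\ell^1$. By \cite[Theorem 4.19]{Rudin-FA}, $\F_\a$ is compact if and only if $\G_\a$ is compact, and $\G_\a$ is compact if and only if its image $\G_\a(A^*_1)=\set{\phi(\a):\phi\in A^*_1}$ of the unit ball is totally bounded in the (complete) space $\ell^1$, which is precisely the condition $\a\in\ell^1_c(A)$.

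The step I expect to require the most care is the convergence in step (i): mere boundedness of $\set{\phi(\a):\phi\in A^*_1}$ in $\ell^1$ does not force $\sum_i\be_i a_i$ to converge for an arbitrary bounded $\be$, and it is the decay $\be\in c_0$ that furnishes the vanishing tail. This is the reason the natural domain here is $c_0$ rather than $\ell^\infty$, and it is what places the adjoint in $\ell^1=(c_0)^*$, so that the Schauder-duality argument of Theorem \ref{thm:compact-iff} transfers essentially verbatim to step (iii).
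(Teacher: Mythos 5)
Your proposal is correct and follows essentially the same route as the paper's own proof: the Cauchy estimate $\bignorm{\sum_{i=m}^n\be_i a_i}\leq \normm{\a}_1\sup_{i\geq m}|\be_i|$ exploiting $\be\in c_0$, the converse via $a_i=\F(e_i)$ with the $p=1$ duality estimate, and compactness through the adjoint $\G_\a=\F_\a^*$ under $(c_0)^*=\ell^1$ via Schauder's theorem. You also correctly prove the norm identity in the form $\|\F_\a\|=\normm{\a}_1$, which fixes an evident typo in the statement (the paper writes $\|\a\|_1$, which need not be finite for $\a\in\ell^1_b(A)$) and in the paper's proof (which writes $\normm{\a}_\infty$).
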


\begin{proof}
    Assume that
     $\a=(a_1,a_2,\dotsc)$ is an element of $\ell^1_b(A)$. For $n\geq m \geq 1$,
    similar calculations as in \eqref{eqn:norm(F(be))} lead to
  \begin{align*}
    \Bignorm{\sum_{i=m}^n \beta_i a_i}
        \leq  \normm{\a}_1 \max\set{|\be_i|:m\leq i \leq n}.
  \end{align*}

  Since $\be_m\to0$ as $m\to \infty$, the series in \eqref{eqn:a-induces-Fa} satisfies the Cauchy criterion,
  and thus it converges in $A$ to $\F_\a(\be)$. It also follows that $\|\F_\a(\be)\| \leq \normm{\a}_1 \|\be\|_\infty$,
  for all $\be\in c_0$ so that $\|\F_\a\|\leq \normm{\a}_1$. It is easy to verify that $\|\F_\a\|=\normm{\a}_\infty$.

  Conversely, let $\F:c_0 \to A$ be a bounded operator and let $a_i=\F(e_i)$, for $i\in\N$.
  Calculations in \eqref{eqn:a-in-lpb} remain valid for $p=1$, and lead to
    \begin{equation*}
      \sum_{i=1}^n |\phi(a_i)|\leq \|\F\|
      \quad (n\in \N,\phi\in A^*_1).
    \end{equation*}

    Therefore, $\phi(\a)\in \ell^1$ and $\|\phi(\a)\|_1 \leq \|\F\|$, for all $\phi\in A^*_1$,
    showing that $\a\in \ell^1_b(A)$. Since $\F(e_i) = a_i = \F_\a(e_i)$, for every $i\in \N$,
    we get $\F=\F_\a$.

    Finally, define $\G_\a:A^*\to\ell^1$ by $\G_\a(\phi)=\phi(\a)$. Then $\G_\a=\F_\a^*$ and,
    therefore, $\F_\a$ is compact if and only if $\G_\a$ is compact, equivalently,
    $\G_\a(A^*_1)$ is totally bounded in $\ell^1$. This means that $\a\in\ell^1_c(A)$.
\end{proof}

We summarize the discussion above in the following statement.

\begin{thm}\label{thm:summarized}
  Let $1 \leq p\leq \infty$. Then $(\ell^p_b(A),\enormm_p)$ is a Banach space having $\ell^p_c(A)$
  as a closed subspace. In addition, if $\F_\a$ is defined by \eqref{eqn:a-induces-Fa}
  then
  \begin{enumerate}[$(1)$]
    \item for $p=1$, the map $\a\mapsto \F_\a$ is an isometric isomorphism
          of $\ell^1_b(A)$ onto $\B(c_0,A)$, mapping $\ell^1_c(A)$ onto $\K(c_0,A)$,

    \item for $1<p \leq \infty$, the map $\a\mapsto \F_\a$ is an isometric isomorphism
          of $\ell^p_b(A)$ onto $\B(\ell^q,A)$, mapping $\ell^p_c(A)$ onto $\K(\ell^q,A)$,
          where $1/p+1/q=1$.
  \end{enumerate}
\end{thm}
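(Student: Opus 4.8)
The plan is to treat this statement as a synthesis of the characterizations already established, transporting the Banach-space structure of the operator spaces back along the map $\Phi\colon\a\mapsto\F_\a$. First I would record that $\enormm_p$ is genuinely a norm on $\ell^p_b(A)$: absolute homogeneity and the triangle inequality are immediate from its definition as a supremum of $\ell^p$-norms (Definition \ref{dfn:normm(a)}), while separation follows from Hahn--Banach, since $\normm{\a}_p=0$ forces $\phi(a_i)=0$ for all $\phi\in A^*_1$ and all $i$, hence $a_i=0$ for every $i$, i.e.\ $\a=\z$. The map $\Phi$ is linear by \eqref{eqn:a-induces-Fa}, and the norm identity $\|\F_\a\|=\normm{\a}_p$ together with surjectivity onto the relevant operator space is exactly what the earlier theorems supply: Theorem \ref{thm:bounded-iff} gives both for $1<p\leq\infty$ (with codomain $\ell^q$), and Theorem \ref{thm:bounded-compact-p=1} gives both for $p=1$ (with codomain $c_0$). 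Being norm-preserving, $\Phi$ is injective, so in each regime it is an isometric isomorphism onto $\B(\ell^q,A)$, respectively $\B(c_0,A)$.

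Next I would deduce the completeness assertion. Since $A$ is a Banach space, $\B(\ell^q,A)$ (respectively $\B(c_0,A)$) is complete. An isometric isomorphism carries Cauchy sequences to Cauchy sequences and preserves limits, so completeness transfers back through $\Phi^{-1}$; hence $(\ell^p_b(A),\enormm_p)$ is a Banach space.

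Then I would handle $\ell^p_c(A)$ and the compact operators together. By Theorem \ref{thm:compact-iff} (for $1<p\leq\infty$) and the last clause of Theorem \ref{thm:bounded-compact-p=1} (for $p=1$), the operator $\F_\a$ is compact if and only if $\a\in\ell^p_c(A)$, so $\Phi$ restricts to a bijection of $\ell^p_c(A)$ onto $\K(\ell^q,A)$, respectively $\K(c_0,A)$. Because the compact operators form a closed subspace of the bounded operators into the Banach space $A$, and $\Phi$ is an isometry and hence a homeomorphism, the preimage $\ell^p_c(A)=\Phi^{-1}(\K)$ is a closed subspace of $\ell^p_b(A)$. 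This delivers both the ``closed subspace'' claim and the ``onto the compact operators'' claim at once.

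I do not expect a substantial analytic obstacle, since all the hard estimates were already carried out in Theorems \ref{thm:bounded-iff}--\ref{thm:bounded-compact-p=1}; the result is essentially an assembly step. The only points that demand care are bookkeeping ones: keeping the two regimes cleanly separated ($p=1$ with codomain $c_0$ and predual $\ell^1$, versus $1<p\leq\infty$ with codomain $\ell^q$), and establishing that $\enormm_p$ separates points \emph{before} invoking the isometry, so that the phrase ``isometric isomorphism'' is justified and completeness may legitimately be pulled back. The standard facts that $\B(X,A)$ is complete and that $\K(X,A)$ is closed in $\B(X,A)$ whenever $A$ is complete are used without further comment.
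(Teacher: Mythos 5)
Your proposal is correct and matches the paper's intent exactly: the paper offers no separate argument for this theorem (it is introduced with ``We summarize the discussion above''), and the intended proof is precisely the assembly you carry out, combining the norm identity and surjectivity from Theorems \ref{thm:bounded-iff} and \ref{thm:bounded-compact-p=1} with the compactness characterizations of Theorems \ref{thm:compact-iff} and \ref{thm:bounded-compact-p=1}, then pulling completeness of $\B(\ell^q,A)$ (resp.\ $\B(c_0,A)$) and closedness of the compact operators back through the isometry. Your explicit verification that $\enormm_p$ separates points via Hahn--Banach, and the transfer-of-completeness step, are exactly the bookkeeping the paper leaves implicit; no gaps.
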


The rest of the section is devoted to other characterizations of $\ell^p_c(A)$.

\begin{prop}\label{prop:a-in-lc-iff-an-to-a}
  Let $p\in[1,\infty)$. Given $\a\in \ell^p_b(A)$, define
  \begin{equation}\label{eqn:an}
    \a_n = (a_1,\dotsc,a_n,0,0,0,\dotsc) \quad (n\in\N).
  \end{equation}
  Then $\a\in \ell^p_c(A)$ if and only if
  $\a_n\to\a$, with respect to $\enormm_p$. In particular, $\ell^p_c(A)$ is the closure of $c_{00}(A)$ in $\ell^p_b(A)$.
\end{prop}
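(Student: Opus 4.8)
The plan is to read Proposition \ref{prop:a-in-lc-iff-an-to-a} as essentially a reformulation of Kolmogorov's compactness theorem (Theorem \ref{thm:Kolmogorov-Riesz-compactness}), once the tail quantity appearing there is identified with $\normm{\a-\a_n}_p$. First I would record the key formula. Since $\a-\a_n=(0,\dots,0,a_{n+1},a_{n+2},\dots)$, for each $\phi\in A^*_1$ the sequence $\phi(\a-\a_n)$ vanishes in its first $n$ coordinates, so $\|\phi(\a-\a_n)\|_p=\bigprn{\sum_{i>n}|\phi(a_i)|^p}^{1/p}$. Taking the supremum over $\phi\in A^*_1$ gives
\[
  \normm{\a-\a_n}_p=\sup_{\phi\in A^*_1}\Bigprn{\sum_{i>n}|\phi(a_i)|^p}^{1/p},
\]
which is precisely the quantity controlled in condition \eqref{eqn:Kolmogorov-Riesz-compactness} for the set $K=\set{\phi(\a):\phi\in A^*_1}$. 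I would also note that, for fixed $\phi$, the tail $\sum_{i>n}|\phi(a_i)|^p$ is nonincreasing in $n$, hence so is the supremum $\normm{\a-\a_n}_p$.

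With this in hand the equivalence follows directly from Theorem \ref{thm:Kolmogorov-Riesz-compactness}. For the implication $\a_n\to\a\Rightarrow\a\in\ell^p_c(A)$: convergence means that for every $\e>0$ there is $m$ with $\normm{\a-\a_m}_p<\e$, which is exactly \eqref{eqn:Kolmogorov-Riesz-compactness} for $K$; since $\a\in\ell^p_b(A)$ already guarantees that $K$ is bounded, hence pointwise bounded, in $\ell^p$, Kolmogorov's theorem yields that $K$ is totally bounded, i.e.\ $\a\in\ell^p_c(A)$. Conversely, if $\a\in\ell^p_c(A)$ then $K$ is totally bounded, so Kolmogorov's theorem supplies, for each $\e>0$, an $m$ with $\normm{\a-\a_m}_p<\e$; by the monotonicity noted above this forces $\normm{\a-\a_n}_p<\e$ for all $n\geq m$, giving $\a_n\to\a$.

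Finally, for the ``in particular'' clause I would argue by double inclusion. Each $\a_n$ lies in $c_{00}(A)$, so the forward direction already exhibits every $\a\in\ell^p_c(A)$ as a $\enormm_p$-limit of members of $c_{00}(A)$, whence $\ell^p_c(A)\subseteq\ol{c_{00}(A)}$. For the reverse inclusion, $c_{00}(A)\subseteq\ell^p(A)\subseteq\ell^p_c(A)$ by Proposition \ref{prop:inclusions}, and $\ell^p_c(A)$ is $\enormm_p$-closed in $\ell^p_b(A)$ by Theorem \ref{thm:summarized}, so $\ol{c_{00}(A)}\subseteq\ell^p_c(A)$.

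I do not expect a genuine obstacle here: essentially all the content is carried by the identification of $\normm{\a-\a_n}_p$ with the Kolmogorov tail. The only point requiring care is avoiding circularity in the last paragraph. If one prefers not to invoke the closedness of $\ell^p_c(A)$ from Theorem \ref{thm:summarized}, it can be established directly by a short uniform-approximation argument: a $\enormm_p$-limit $\a$ of members of $\ell^p_c(A)$ has its associated set $K$ contained, for every $\e>0$, in an $\e$-neighborhood of a totally bounded set, and is therefore itself totally bounded.
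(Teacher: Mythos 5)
Your proposal is correct and follows essentially the same route as the paper: the identity $\normm{\a-\a_n}_p=\sup_{\phi\in A^*_1}\bigl(\sum_{i>n}|\phi(a_i)|^p\bigr)^{1/p}$ combined with Theorem \ref{thm:Kolmogorov-Riesz-compactness} and the monotonicity of the tails is exactly the engine of the paper's proof, with your converse direction matching it verbatim in substance. The only cosmetic difference is in the ``if'' direction, where the paper deduces $\a\in\ol{c_{00}(A)}\subset\ell^p_c(A)$ from the closedness of $\ell^p_c(A)$ while you apply Kolmogorov's theorem directly; both are valid, and your citation of Theorem \ref{thm:summarized} is non-circular since its closedness assertion is obtained beforehand from the isometry $\a\mapsto\F_\a$ and the closedness of $\K(\ell^q,A)$ in $\B(\ell^q,A)$.
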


\begin{proof}
  Since $c_{00}(A)\subset \ell^p_c(A)$, we get $\ol{c_{00}(A)}\subset \ell^p_c(A)$,
  where the closure is taken with respect to $\enormm_p$.
  Hence, $\a_n\to \a$ implies that $\a\in \ol{c_{00}(A)}\subset \ell^p_c(A)$.

  Conversely, if $\a\in \ell^p_c(A)$, then $\set{\phi(\a):\phi\in A^*_1}$ is a totally bounded
  set in $\ell^p$. By Theorem \ref{thm:Kolmogorov-Riesz-compactness}, for every $\e>0$,
  there is $m\in \N$ such that
    \begin{equation*}
      \Bigprn{\sum_{i > m} |\phi(a_i)|^p}^{1/p} < \e \quad (\phi \in A^*_1).
    \end{equation*}
  On the other hand, for $n\geq m$, we have
  \begin{equation*}
    \normm{\a_n-\a}_p
      = \sup_{\phi\in A^*_1} \Bigprn{\sum_{i > n} |\phi(a_i)|^p}^{1/p}
      \leq \sup_{\phi\in A^*_1} \Bigprn{\sum_{i > m} |\phi(a_i)|^p}^{1/p}
      \leq \e.
  \end{equation*}
  This shows that $\normm{\a_n-\a}_p\to0$ as $n\to \infty$.
\end{proof}

If $p=\infty$, then $\ell^\infty_b(A)=\ell^\infty(A)$ and $\normm{\a}_\infty=\|\a\|_\infty$,
for every $\a\in\ell^\infty(A)$. In this case, $\ol{c_{00}(A)} = c_0(A)$ and the characterization
above is not valid for $p=\infty$. Instead, we have the following result for $\ell^\infty_c(A)$
the proof of which is postponed to the end of Section \ref{sec:C(Omega)}.

\begin{prop}\label{prop:a-in-lc-iff-p=infty}
  Let $\a=(a_1,a_2,\dotsc)$ be a sequence in $A$. Then $\a\in\ell^\infty_c(A)$ if and only if the
  set $\set{a_1,a_2,\dotsc}$ is totally bounded in $A$.
\end{prop}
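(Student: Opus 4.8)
The plan is to reduce the statement to the compactness characterization already in hand. Taking $p=\infty$, so $q=1$, Theorem~\ref{thm:summarized}\,(2) (equivalently Theorem~\ref{thm:compact-iff}) tells us that for $\a=(a_1,a_2,\dotsc)\in\ell^\infty_b(A)=\ell^\infty(A)$ we have $\a\in\ell^\infty_c(A)$ if and only if the induced operator $\F_\a\colon\ell^1\to A$, $\F_\a(\be)=\sum_i\be_i a_i$, is compact. Hence it suffices to prove that $\F_\a$ is compact precisely when $S:=\set{a_1,a_2,\dotsc}$ is totally bounded in $A$.

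The first step is to identify the closure of the image of the unit ball. Write $\ell^1_1$ for the closed unit ball of $\ell^1$ and let $\operatorname{aco}(S)$ denote the absolutely convex hull $\set{\sum_i\be_i a_i : \be\in c_{00},\ \sum_i|\be_i|\leq 1}$, which already contains $0$. The finitely supported elements of $\ell^1_1$ are norm-dense in $\ell^1_1$ and $\F_\a$ carries them onto $\operatorname{aco}(S)$; since $\F_\a$ is bounded by Theorem~\ref{thm:bounded-iff}, continuity gives $\operatorname{aco}(S)\subseteq\F_\a(\ell^1_1)\subseteq\ol{\operatorname{aco}(S)}$, whence $\ol{\F_\a(\ell^1_1)}=\ol{\operatorname{aco}(S)}$. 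Thus $\F_\a$ is compact if and only if the closed absolutely convex hull $\ol{\operatorname{aco}(S)}$ is compact.

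Both implications now follow quickly. If $S$ is totally bounded then $\ol S$ is compact (as $A$ is complete), and by Mazur's theorem the closed absolutely convex hull of a compact subset of a Banach space is again compact, so $\ol{\operatorname{aco}(S)}$ is compact, $\F_\a$ is compact, and $\a\in\ell^\infty_c(A)$. Conversely, if $\a\in\ell^\infty_c(A)$ then $\ol{\operatorname{aco}(S)}=\ol{\F_\a(\ell^1_1)}$ is compact, and since each $a_i=\F_\a(e_i)$ lies in it, $S$ is a subset of a compact set and is therefore totally bounded.

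The only nonroutine ingredient is Mazur's theorem on compactness of the closed absolutely convex hull of a compact set; everything else is the density of $c_{00}$ in $\ell^1$ together with the already-established equivalence between membership in $\ell^\infty_c(A)$ and compactness of $\F_\a$. An alternative route, consistent with deferring the proof to Section~\ref{sec:C(Omega)}, is to set $\Om=(A^*_1,\text{weak*})$ and $f_i=\hat a_i$, note that $F(\Om)=\set{\phi(\a):\phi\in A^*_1}$, and apply the $\c(\Om)$ equivalence between total boundedness of $F(\Om)$ and continuity of $\phi\mapsto(\phi(a_i))_i$; the passage from that continuity to total boundedness of $S$ is then governed by Lemma~\ref{lem:w*-then-uniform}. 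I would expect the Mazur-based argument to be the cleaner of the two, with the hull-compactness step being the point requiring an external result.
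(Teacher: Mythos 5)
Your argument is correct, and it takes a genuinely different route from the paper. The paper defers this proposition to the end of Section \ref{sec:C(Omega)} and proves it by embedding $A$ isometrically into $\A=\c(A^*_1)$ (with $A^*_1$ weak*-compact by Banach--Alaoglu) and then reading the result off the equivalences of Theorem \ref{thm:FF-is-compact-p=infty}, whose proof in turn runs through Lemma \ref{lem:F-in-lpc-iff-F(X)-is-compact} (Jordan decomposition of Radon measures, closed convex hulls) and an Arzel\'a--Ascoli equicontinuity argument. You instead stay entirely inside Section \ref{sec:compact}: Theorem \ref{thm:summarized}(2) (or Theorem \ref{thm:compact-iff} plus injectivity of $\a\mapsto\F_\a$, via $a_i=\F_\a(e_i)$) translates $\a\in\ell^\infty_c(A)$ into compactness of $\F_\a:\ell^1\to A$; density of the finitely supported elements in the unit ball $\ell^1_1$ correctly gives $\ol{\F_\a(\ell^1_1)}=\ol{\operatorname{co}}'$, the closed absolutely convex hull of $S=\set{a_1,a_2,\dotsc}$; Mazur's theorem closes one direction and $a_i=\F_\a(e_i)\in\F_\a(\ell^1_1)$ the other. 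There is no circularity: Theorems \ref{thm:bounded-iff}, \ref{thm:compact-iff} and \ref{thm:summarized} nowhere use Proposition \ref{prop:a-in-lc-iff-p=infty} (the paper postpones the proposition only because its own proof needs Section \ref{sec:C(Omega)}). What your route buys is self-containedness and placement: the proposition could then be proved in Section \ref{sec:compact} where it is stated, with no measures, no $\c(\Om)$ machinery, and no Arzel\'a--Ascoli --- the only external ingredient is the compactness of the closed (absolutely) convex hull of a compact set, which is the same fact the paper itself invokes as \cite[Theorem 3.20]{Rudin-FA} inside Lemma \ref{lem:F-in-lpc-iff-F(X)-is-compact}. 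The paper's route, by contrast, is essentially free once Theorem \ref{thm:FF-is-compact-p=infty} is in hand and ties the proposition into the continuous-function framework. One caveat on your closing side remark only: in the alternative $\c(\Om)$-style sketch, Lemma \ref{lem:w*-then-uniform} points the wrong way --- it deduces uniform convergence \emph{from} total boundedness --- whereas the paper extracts total boundedness of $\set{a_1,a_2,\dotsc}$ from compactness of the operator via $f_i=\F_F(e_i)$ (implication (3) $\Rightarrow$ (4) of Theorem \ref{thm:FF-is-compact-p=infty}) and gets continuity from total boundedness by Arzel\'a--Ascoli; this inaccuracy does not affect your main proof.
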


The condition $\a_n\to \a$ in $\ell^p_b(A)$, $1\leq p<\infty$, implies that
the operator $\F_\a$ in \eqref{eqn:a-induces-Fa} is compact. The question that arises here,
for $p=\infty$, is that how the operator $\F_\a$ responds to the condition $\a_n\to \a$ in $\ell^\infty(A)$.
%The following result answers this question.

\begin{thm}\label{thm:F=Fa-case-q=1}
   Let $\a=(a_1,a_2,\dotsc)$ be an element of $\ell^\infty(A)$, and define $\a_n$ as in \eqref{eqn:an}.
   The following statements are equivalent;
   \begin{enumerate}[$(1)$]
     \item the induced operator $\F_\a:\ell^1\to A$ is \wcbs,
     \item $\a_n\to \a$ in $\ell^\infty(A)$, equivalently, $\a\in c_0(A)$.
   \end{enumerate}
\end{thm}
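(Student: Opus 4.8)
The plan is to reduce the equivalence to the norm identity $\|\F_\a\|=\normm{\a}_\infty=\|\a\|_\infty$ supplied by Theorem~\ref{thm:bounded-iff} (with $p=\infty$, $q=1$), together with the closedness of $\WC(\ell^1,A)$ in the operator norm from Theorem~\ref{thm:wcbs-closed-subset-compact}. First I would record that the two formulations in statement~(2) coincide: linearity of $\a\mapsto\F_\a$ gives $\F_{\a_n}-\F_\a=\F_{\a_n-\a}$, whence $\|\F_{\a_n}-\F_\a\|=\|\a_n-\a\|_\infty=\sup_{i>n}\|a_i\|$, and this tends to $0$ precisely when $\|a_i\|\to0$, i.e.\ when $\a\in c_0(A)$.

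For $(2)\Rightarrow(1)$ I would first check that each truncation $\F_{\a_n}$ is \wcbs. Since $\F_{\a_n}(\be)=\sum_{i=1}^n\be_ia_i$ is a finite sum, a bounded net $\be^{(\la)}\to\be$ in the weak*\,topology of $\ell^1=(c_0)^*$ converges coordinatewise (test against the unit vectors of $c_0$), and hence $\F_{\a_n}(\be^{(\la)})\to\F_{\a_n}(\be)$. If $\a\in c_0(A)$, the computation above yields $\F_{\a_n}\to\F_\a$ in operator norm, so $\F_\a\in\WC(\ell^1,A)$ by the closedness asserted in Theorem~\ref{thm:wcbs-closed-subset-compact}.

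For $(1)\Rightarrow(2)$ the idea is to feed $\F_\a$ a suitable weak*-null sequence. I claim that the unit vectors $(e_i)$, regarded as elements of $\ell^1=(c_0)^*$, converge to $0$ in the weak*\,topology: for every $\gamma\in c_0$ one has $\hat e_i(\gamma)=\gamma_i\to0$. As $\|e_i\|_1=1$, this is a bounded sequence, so $\F_\a$ being \wcbs forces $a_i=\F_\a(e_i)\to0$ in $A$, that is $\a\in c_0(A)$.

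The argument is short, and there is no genuine obstacle beyond bookkeeping; the one point that must be kept straight is the identification of the domain $\ell^1$ with the dual $(c_0)^*$, so that the relevant weak*\,topology is the one dual to $c_0$. This is exactly what makes $(e_i)$ weak*-null, and it explains why the statement is framed for $\F_\a\colon\ell^1\to A$ rather than for operators on $\ell^\infty$.
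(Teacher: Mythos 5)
Your proof is correct, and while your direction $(1)\Rightarrow(2)$ coincides with the paper's (both feed the weak*-null sequence $(e_i)$ in $\ell^1=(c_0)^*$ to $\F_\a$ and read off $a_i\to0$), your direction $(2)\Rightarrow(1)$ takes a genuinely different route. The paper defines $\G_\a:A^*\to c_0$, $\G_\a(\phi)=\phi(\a)$, checks that $\F_\a$ and $\G_\a$ form a dual pair in the sense of \eqref{eqn:F-G-dual-pair}, deduces compactness of $\F_\a$ from total boundedness of $\set{a_1,a_2,\dotsc}$ via Proposition \ref{prop:a-in-lc-iff-p=infty}, and then invokes Theorem \ref{thm:wcbs-iff-compact} (compact plus dual mapping implies \wcbs). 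You instead verify directly that each truncation $\F_{\a_n}$ is \wcbs (weak* convergence in $\ell^1=(c_0)^*$ gives coordinatewise convergence, and the sum is finite), use the norm identity $\|\F_{\a_n}-\F_\a\|=\normm{\a_n-\a}_\infty=\sup_{i>n}\|a_i\|$ from Theorem \ref{thm:bounded-iff}, and conclude by the norm-closedness of $\WC(\ell^1,A)$ from Theorem \ref{thm:wcbs-closed-subset-compact}. Your approximation argument is more elementary and self-contained: in particular it avoids the paper's forward dependence on Proposition \ref{prop:a-in-lc-iff-p=infty}, whose proof is deferred to the end of Section \ref{sec:C(Omega)} and rests on Theorem \ref{thm:FF-is-compact-p=infty}. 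What the paper's route buys in exchange is a concrete illustration of the dual-pair mechanism that is the theme of Section \ref{sec:wcbs} --- it exhibits the predual-valued adjoint $\G_\a$ explicitly and ties the \wcbs property of $\F_\a$ to its compactness --- whereas your route isolates the purely metric content (finite-rank truncations converge in operator norm precisely when $\a\in c_0(A)$) and would generalize verbatim to any situation where $\WC$ is norm closed and the truncations are \wcbs.
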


\begin{proof}
  Assume that $\F_\a$ is \wcbs. Consider the bounded sequence $(e_i)$ of linear functionals
  on $c_0$ defined by $e_i(\al) = \al_i$, for $\al=(\al_1,\al_2,\dotsc)$ in $c_0$.
  Then $e_i(\al) \to 0$ for every $\al\in c_0$, meaning that $e_i\to 0$, in the weak*
  topology of $\ell^1$. Since $\F_\a$ is \wcbs, we get $\F_\a(e_i) \to 0$ in $A$.
  Therefore, $a_i\to 0$ and thus $\a\in c_0(A)$.
  In particular, $\a_n\to \a$ in $\ell^\infty(A)$.

  Conversely, assume that $\a_n\to \a$ in $\ell^\infty(A)$. Therefore,
  $a_i\to 0$ in $A$ and thus $\phi(\a)\in c_0$, for all $\phi\in A^*$.
  Define $\G_\a:A^*\to c_0$ by $\G_\a(\phi)=\phi(\a)$. The operators $\F_\a$ and $\G_\a$
  satisfy \eqref{eqn:F-G-dual-pair} so that they form a dual pair.
  On the other hand, since $a_i\to0$, the set $\set{a_1,a_2,\dotsc}$ is totally bounded in $A$.
  By Proposition \ref{prop:a-in-lc-iff-p=infty}, $\a\in\ell^\infty_c(A)$ and thus $\F_\a$ is compact.
  Now, by Theorem \ref{thm:wcbs-iff-compact},
  $\F_\a$ is \wcbs.
\end{proof}

For a nonzero element $a\in A$, let $\a=(a,a,\dotsc)$. Then $\a\in \ell^\infty_c(A)\setminus c_0(A)$.
If $A$ is infinite dimensional, take a sequence $\a=(a_1,a_2,\dotsc)$ of linearly independent unit vectors
of $A$. Then $\a \in \ell^\infty(A)$. Since the set $\set{a_1,a_2,\dotsc}$ is not totally bounded in $A$
we get $\a \notin \ell^\infty_c(A)$. These examples show that, in general, the inclusions
$c_0(A) \subset \ell^\infty_c(A) \subset \ell^\infty(A)$ are proper.

\section{$\c(\Om)$-valued operators}
\label{sec:C(Omega)}
\renewcommand{\A}{\scrA}

In this section, we specifically focus on the Banach space of continuous functions.
Let $\Om$ be a compact Hausdorff space, and let $\c(\Om)$ denote the space of
continuous functions $f:\Om\to\C$ equipped with the uniform norm;
\begin{equation*}
  \|f\|_\Om = \sup\set{|f(s)|: s\in \Om}.
\end{equation*}

The dual space $\c(\Om)^*$ is identified as the space $\M(\Om)$ of complex Radon measures
on $\Om$. To simplify the notation, let $\A=\c(\Om)$ and $\M=\M(\Om)$.
For $f\in \A$ and $\mu \in \M$, let
\begin{equation}\label{eqn:mu(f)}
  \mu(f) = \int_\Om f\diff \mu.
\end{equation}
In particular, if $\de_s$ is the point mass measure at $s\in\Om$, then $\de_s(f)=f(s)$.

Let $F=(f_1,f_2,\dotsc)$ be an element of $\ell^p_b(\A)$, $1\leq p\leq \infty$.
For every $s\in \Om$, let
\begin{equation}\label{eqn:F(s)}
   F(s) = (f_1(s),f_2(s),\dotsc).
\end{equation}

This allows us to consider $F$ as a function of $\Om$ to $\ell^p$.
The function $F$ is bounded, but it might fail to be continuous.
We will prove that $F$ is continuous if and only if $F(\Om)$ is totally bounded
in $\ell^p$. First, we have the following lemma.

\begin{lem}\label{lem:F-in-lpc-iff-F(X)-is-compact}
  Let $F=(f_1,f_2,\dotsc)$ belong to $\ell^p_b(\A)$, $1\leq p\leq \infty$.
  Then $F\in\ell^p_c(\A)$ if and only if $F(\Om)$ is totally bounded in $\ell^p$.
\end{lem}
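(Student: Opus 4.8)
The plan is to recast both conditions in terms of the map $\mu\mapsto\mu(F)=(\mu(f_1),\mu(f_2),\dotsc)$ on the unit ball $\M_1=\set{\mu\in\M:\|\mu\|\leq1}$ and to compare its range $K=\set{\mu(F):\mu\in\M_1}$ with the set $F(\Om)$. By the very definition of $\ell^p_c(\A)$ (with $\A^*=\M$), the assertion $F\in\ell^p_c(\A)$ is exactly the total boundedness of $K$ in $\ell^p$. The necessity is then immediate: each point mass $\de_s$ lies in $\M_1$ and satisfies $\de_s(F)=F(s)$, so $F(\Om)\subseteq K$, and a subset of a totally bounded set is totally bounded. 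Hence $F\in\ell^p_c(\A)$ forces $F(\Om)$ to be totally bounded.

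For the converse --- the substantial direction --- I would assume $F(\Om)$ is totally bounded and prove $K\subseteq C$, where $C$ denotes the closed absolutely convex hull of $F(\Om)$, that is, the norm closure of $\co(\Delta)$ with $\Delta=\set{zF(s):|z|\leq1,\ s\in\Om}$. To establish this inclusion I would argue by duality. Fix $\mu\in\M_1$, set $v=\mu(F)$, and test $v$ against the functionals of the predual unit ball (that is, $\be\in\ell^q$ with $\|\be\|_q\leq1$ when $1<p\leq\infty$, and $\be\in c_0$ with $\|\be\|_\infty\leq1$ when $p=1$). By Theorem \ref{thm:bounded-iff} the series $\F_F(\be)=\sum_i\be_if_i$ converges in $\c(\Om)$, so continuity of $\mu$ gives
\[
  \hat\be(v)=\sum_i\be_i\mu(f_i)=\mu\Bigprn{\sum_i\be_if_i}=\int_\Om\Bigprn{\sum_i\be_if_i(s)}\diff\mu(s).
\]
Since $\sum_i\be_if_i(s)=\hat\be(F(s))$ and $\|\mu\|\leq1$, it follows that $|\hat\be(v)|\leq\sup_{s\in\Om}|\hat\be(F(s))|=\sup_{w\in F(\Om)}|\hat\be(w)|$. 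As this holds for every such $\be$, the bipolar theorem places $v$ in the absolutely convex hull of $F(\Om)$ closed in the relevant weak*\,topology.

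It then remains to see that $C$ is compact. Because $F(\Om)$ is totally bounded, $\ol{F(\Om)}$ is compact; the continuous scalar multiplication $(z,w)\mapsto zw$ carries the compact set $\set{z\in\C:|z|\leq1}\times\ol{F(\Om)}$ onto a compact set whose closed convex hull is $C$, so by Mazur's theorem --- the closed convex hull of a compact subset of a Banach space is compact --- $C$ is compact, hence totally bounded, and therefore so is $K\subseteq C$. This is exactly $F\in\ell^p_c(\A)$. The main obstacle is precisely the inclusion $K\subseteq C$ coupled with this compactness step: the bipolar theorem only delivers membership in the hull closed in the weak*\,topology $\sigma(\ell^p,\cdot)$ supplied by the predual, so for the endpoints $p=1$ and $p=\infty$ one must note that the norm-compact, absolutely convex set $C$ is compact, and therefore closed, in that coarser Hausdorff topology as well; thus it already contains the weak*-closed hull, and the argument goes through uniformly for all $p\in[1,\infty]$.
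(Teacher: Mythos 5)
Your proof is correct, and its forward direction coincides with the paper's: both compare $F(\Om)$ with $K=\set{\mu(F):\mu\in\M,\ \|\mu\|\leq 1}$ via the point masses $\de_s$. For the converse, however, you take a genuinely different route. The paper works measure by measure: it splits $\mu$ through the Jordan decomposition into four positive parts, invokes Rudin's theorem on vector-valued integrals (\cite[Theorem 3.27]{Rudin-FA}) to place each part in a dilated copy of $\cco F(\Om)$, and packages the pieces into a single compact set $K'\supset K$. You instead argue by duality: the identity $\hat\be(\mu(F))=\mu(\F_F(\be))$ --- legitimate because the series $\sum_i\be_i f_i$ converges in norm in $\c(\Om)$ by Theorems \ref{thm:bounded-iff} and \ref{thm:bounded-compact-p=1} --- yields $|\hat\be(\mu(F))|\leq\sup_{w\in F(\Om)}|\hat\be(w)|$ for all $\be$ in the predual, so the bipolar theorem places $\mu(F)$ in the weak*-closed absolutely convex hull of $F(\Om)$; since the norm-closed absolutely convex hull $C$ is norm-compact (apply \cite[Theorem 3.20]{Rudin-FA} to the compact balanced set $\set{zw:|z|\leq 1,\ w\in\ol{F(\Om)}}$), it is compact, hence closed, in the coarser Hausdorff weak*\,topology and therefore already contains that hull --- a point you rightly flag, and exactly what makes the endpoint pairings $(\ell^1,c_0)$ and $(\ell^\infty,\ell^1)$ go through uniformly. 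As for what each approach buys: yours handles complex scalars in one stroke via the balanced hull, avoids the Jordan-decomposition bookkeeping, and needs only componentwise integration, sidestepping the continuity of $F:\Om\to\ell^p$ that the standard form of Rudin's Theorem 3.27 presupposes (a property the paper derives from total boundedness of $F(\Om)$ only afterwards, in Theorem \ref{thm:FF-is-compact}); the paper's argument, in turn, stays entirely within Rudin's integration toolkit and never calls on polars or locally convex duality. Both proofs ultimately rest on the same engine, namely that the closed convex hull of a compact subset of a Banach space is compact.
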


\begin{proof}
  For every $\mu\in \M$, let $\mu(F)=(\mu(f_1),\mu(f_2),\dotsc)$ and set
  \begin{equation*}
    K=\set{\mu(F): \mu\in\M,\|\mu\| \leq 1}.
  \end{equation*}\

  Then $F(\Om)\subset K$. Now, if $F\in\ell^p_c(\A)$ then $K$ is totally bounded in $\ell^p$ and
  so is $F(\Om)$. Conversely, assume that $F(\Om)$ is totally bounded in $\ell^p$.
  By \cite[Theorem 3.20]{Rudin-FA}, the closed convex hull $\cco F(\Om)$ is compact in $\ell^p$.
  Let $\mu\in \M$ be a real Radon measure on $\Om$ with $\|\mu\|\leq 1$. Using Jordan decomposition
  theorem, write $\mu=\mu^+ - \mu^-$ and $\mu(F)=\mu^+(F)-\mu^-(F)$.
  By \cite[Theorem 3.27]{Rudin-FA}, $\mu^+(F)\in \|\mu^+\| \cco F(\Om)$ and
  $\mu^-(F)\in \|\mu^-\| \cco F(\Om)$. In general, if $\mu=\mu_1+\I \mu_2$ is a complex
  Radon measure on $\Om$ with $\|\mu\|\leq 1$, then $\mu(F)=\mu_1(F) + \I \mu_2(F)$,
  and $\mu(F)\in K'$ where
  \begin{equation*}
    K' = \bigset{s_1 u_1 - s_2 u_2 + \I (t_1 v_1 - t_2v_2): s_i,t_i,\in [0,1], u_i,v_i\in \cco F(\Om),i=1,2}.
  \end{equation*}
  Since $K'$ is compact and $K\subset K'$, the set $K$ is totally bounded in $\ell^p$ and
  thus $F\in \ell^p_c(\A)$.
\end{proof}

Given $F\in \ell^p_b(\A)$, define
$\F_F:\ell^q\to \A$ by
\begin{equation}\label{eqn:FF}
  \F_F(\be) = \sum_{i=1}^\infty \be_i f_i.
\end{equation}

If $p=1$, the equation above is well-defined for $\be\in c_0$.

\begin{thm}\label{thm:FF-is-compact}
  Let $p\in[1,\infty)$ and $F\in \ell^p_b(A)$. The following are equivalent;
  \begin{enumerate}[$(1)$]
    \item \label{item:FF-is-compact}
          the operator $\F_F$, defined by \eqref{eqn:FF}, is compact,

    \item \label{item:F-in-lpc}
          the sequence $F=(f_1,f_2,\dotsc)$ belongs to $\ell^p_c(\A)$,

    \item \label{item:F(X)-is-compact-in-lp}
          the image set $F(\Om)$ is totally bounded in $\ell^p$,

    \item \label{item:F:X-to-lp-is-cnts}
          the function $F$, defined by \eqref{eqn:F(s)}, is continuous.
  \end{enumerate}
\end{thm}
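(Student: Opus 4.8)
The plan is to notice that conditions (1)--(3) are already welded together by the machinery built so far, so that the real work is to splice the continuity statement (4) into the chain; I would do this by establishing (3) $\Leftrightarrow$ (4). Taking $\a=F$, the operator $\F_F$ of \eqref{eqn:FF} is exactly the operator $\F_\a$ of \eqref{eqn:a-induces-Fa}, so Theorem \ref{thm:compact-iff} (for $1<p<\infty$) and Theorem \ref{thm:bounded-compact-p=1} (for $p=1$) give (1) $\Leftrightarrow$ (2), while Lemma \ref{lem:F-in-lpc-iff-F(X)-is-compact} gives (2) $\Leftrightarrow$ (3). It therefore remains to show that $F(\Om)$ is totally bounded in $\ell^p$ precisely when the map $F\colon\Om\to\ell^p$, $s\mapsto(f_1(s),f_2(s),\dotsc)$, is continuous.

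The implication (4) $\Rightarrow$ (3) is the easy half: since $\Om$ is compact, continuity of $F$ forces $F(\Om)$ to be a compact, hence totally bounded, subset of $\ell^p$.

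For (3) $\Rightarrow$ (4), my approach is to separate the $\ell^p$-distance $\|F(s)-F(s_0)\|_p$ into a finite ``head'' of leading coordinates and a ``tail'', controlling the tail uniformly through Kolmogorov's criterion and the head through the continuity of the individual $f_i$. Concretely, fix $s_0\in\Om$ and $\e>0$. Total boundedness of $F(\Om)$ lets me invoke Theorem \ref{thm:Kolmogorov-Riesz-compactness} to choose $m\in\N$ with $\bigprn{\sum_{i>m}|f_i(s)|^p}^{1/p}<\e/4$ for \emph{every} $s\in\Om$; by the triangle inequality in $\ell^p$ the tail part of $\|F(s)-F(s_0)\|_p$ is then below $\e/2$ uniformly in $s$. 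Because each $f_i$ is continuous, there is a neighborhood $U$ of $s_0$ on which $|f_i(s)-f_i(s_0)|<\e/(2m^{1/p})$ for all $i\leq m$, making the head part $\bigprn{\sum_{i\leq m}|f_i(s)-f_i(s_0)|^p}^{1/p}$ at most $\e/2$ on $U$. Adding the two bounds yields $\|F(s)-F(s_0)\|_p<\e$ for $s\in U$, i.e.\ continuity of $F$ at $s_0$.

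The one genuinely substantive point is the step (3) $\Rightarrow$ (4): the heart of the matter is that the \emph{uniform} tail estimate furnished by Theorem \ref{thm:Kolmogorov-Riesz-compactness} is exactly what promotes the automatic coordinatewise continuity of $F$ to norm continuity in $\ell^p$. Everything else is either a citation of an earlier result or the elementary fact that a continuous image of a compact space is compact, so I expect no further obstacle.
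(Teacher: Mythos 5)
Your proposal is correct and follows essentially the same route as the paper: the equivalences $(1)\Leftrightarrow(2)\Leftrightarrow(3)$ by citing Theorems \ref{thm:compact-iff} and \ref{thm:bounded-compact-p=1} and Lemma \ref{lem:F-in-lpc-iff-F(X)-is-compact}, and a head/tail splitting via Theorem \ref{thm:Kolmogorov-Riesz-compactness} for $(3)\Rightarrow(4)$. The only (cosmetic) difference is that you control the tail with Minkowski's inequality in $\ell^p$, while the paper uses the pointwise bound $|a-b|^p\leq 2^p\bigprn{|a|^p+|b|^p}$, arriving at the constant $(1+2^{p+1})\e^p$; both estimates are valid.
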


\begin{proof}
  \eqref{item:FF-is-compact} \IFF\ \eqref{item:F-in-lpc}.
  It follows from Theorems \ref{thm:compact-iff} and \ref{thm:bounded-compact-p=1}.

  \eqref{item:F-in-lpc} \IFF\ \eqref{item:F(X)-is-compact-in-lp}.
  It follows from Lemma \ref{lem:F-in-lpc-iff-F(X)-is-compact}.

  \eqref{item:F(X)-is-compact-in-lp} \FI\ \eqref{item:F:X-to-lp-is-cnts}.
  Assume that $F(\Om)$ is totally bounded in $\ell^p$. We show that $F$ is continuous at each point
  $s_0\in \Om$. Given $\e>0$, by Theorem \ref{thm:Kolmogorov-Riesz-compactness},
  there is $m>0$ such that
  \begin{equation*}
    \sum_{i>m} |f_i(s)|^p < \e^p \quad (s\in \Om).
  \end{equation*}

  There is a neighbourhood $U_0$ of
  $s_0$ in $\Om$ such that
  \begin{equation*}
    |f_i(s)-f_i(s_0)| \leq \frac{\e}{m^{1/p}} \quad (s\in U_0, 1\leq i \leq m).
  \end{equation*}

  Hence, for every $s\in U_0$, we get
  \begin{align*}
    \|F(s)-F(s_0)\|^p_p
       & = \sum_{i=1}^m |f_i(s) - f_i(s_0)|^p + \sum_{i>m} |f_i(s) - f_i(s_0)|^p\\
       & \leq \sum_{i=1}^m \frac{\e^p}m + 2^p \sum_{i>m} \bigprn{|f_i(s)|^p + |f_i(s_0)|^p} \\
       & \leq \e^p + 2^p(\e^p+\e^p) = (1 + 2^{p+1})\e^p.
  \end{align*}

  It shows that $F$ is continuous at $s_0$.

  \eqref{item:F:X-to-lp-is-cnts} \FI\ \eqref{item:F(X)-is-compact-in-lp}.
  Obvious.
\end{proof}

We now discuss the case where $p=\infty$.

\begin{thm}\label{thm:FF-is-compact-p=infty}
  For $F \in \ell^\infty(\A)$, the following are equivalent.
  \begin{enumerate}[$(1)$]
    \item \label{item:F(X)-is-totbdd-in-ell-infty}
          the image set $F(\Om)$ is totally bounded in $\ell^\infty$,

    \item \label{item:F-in-lc-p=infty}
          the sequence $F=(f_1,f_2,\dotsc)$ belongs to $\ell^\infty_c(\A)$,

    \item \label{item:FF-is-compact-p=infty}
          the operator $\F_F:\ell^1\to \A$ is compact,

    \item \label{item:F-is-totbdd-p=infty}
          the set $\cF=\set{f_1,f_2,\dotsc}$ is totally bounded in $\A$,

    \item \label{item:F-is-cnts-p=infty}
          the function $F:\Om \to\ell^\infty$ is continuous.
  \end{enumerate}
\end{thm}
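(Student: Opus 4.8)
The plan is to close the single cycle of implications
\eqref{item:F(X)-is-totbdd-in-ell-infty} $\Rightarrow$ \eqref{item:F-in-lc-p=infty} $\Rightarrow$ \eqref{item:FF-is-compact-p=infty} $\Rightarrow$ \eqref{item:F-is-totbdd-p=infty} $\Rightarrow$ \eqref{item:F-is-cnts-p=infty} $\Rightarrow$ \eqref{item:F(X)-is-totbdd-in-ell-infty}. The first two links and the closing one are immediate from facts already at hand; the substance of the theorem lies in the passage from total boundedness \emph{along $\Om$} (the columns $F(s)$) to total boundedness \emph{along $\N$} (the rows $f_i$), which I route through the compactness of $\F_F$ and then upgrade via the Arzel\`a--Ascoli theorem.

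For \eqref{item:F(X)-is-totbdd-in-ell-infty} $\Rightarrow$ \eqref{item:F-in-lc-p=infty} I simply apply Lemma \ref{lem:F-in-lpc-iff-F(X)-is-compact} with $p=\infty$, which is stated for all $p\in[1,\infty]$. For \eqref{item:F-in-lc-p=infty} $\Rightarrow$ \eqref{item:FF-is-compact-p=infty} I apply Theorem \ref{thm:compact-iff} with $p=\infty$, $q=1$, so that $\F_F\colon\ell^1\to\A$ is the operator induced by $F$. For the closing link \eqref{item:F-is-cnts-p=infty} $\Rightarrow$ \eqref{item:F(X)-is-totbdd-in-ell-infty}: if $F\colon\Om\to\ell^\infty$ is continuous then, $\Om$ being compact, $F(\Om)$ is a compact, hence totally bounded, subset of $\ell^\infty$.

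The two remaining links carry the real content. For \eqref{item:FF-is-compact-p=infty} $\Rightarrow$ \eqref{item:F-is-totbdd-p=infty}, the key observation is that $\F_F(e_i)=f_i$, where each standard basis vector $e_i$ lies in the closed unit ball $B$ of $\ell^1$; hence $\cF=\set{f_1,f_2,\dotsc}$ is contained in $\F_F(B)$. Since $\F_F$ is compact, $\F_F(B)$ is relatively compact, so its subset $\cF$ is totally bounded in $\A=\c(\Om)$. For \eqref{item:F-is-totbdd-p=infty} $\Rightarrow$ \eqref{item:F-is-cnts-p=infty}, note that $F\in\ell^\infty(\A)$ forces $\cF$ to be uniformly bounded; since $\Om$ is compact Hausdorff, the Arzel\`a--Ascoli theorem turns total boundedness of $\cF$ into equicontinuity of the family $\set{f_i}$. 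Equicontinuity at a point $s_0$ says precisely that $\sup_i\abs{f_i(s)-f_i(s_0)}=\norm{F(s)-F(s_0)}_\infty\to0$ as $s\to s_0$, which is continuity of $F\colon\Om\to\ell^\infty$ at $s_0$; running this over all $s_0$ yields \eqref{item:F-is-cnts-p=infty} and closes the cycle.

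I expect the step \eqref{item:FF-is-compact-p=infty} $\Rightarrow$ \eqref{item:F-is-totbdd-p=infty} to be the main obstacle. Unlike the case $p<\infty$ treated in Theorem \ref{thm:FF-is-compact}, total boundedness in $\ell^\infty$ admits no Kolmogorov-type tail criterion, so there is no direct route from column total boundedness $\set{F(s):s\in\Om}$ to row total boundedness $\set{f_i}$. Recognizing the rows as the images $\F_F(e_i)$ of basis vectors under the compact operator $\F_F$ is exactly what bridges the two sides; once \eqref{item:F-is-totbdd-p=infty} is secured, Arzel\`a--Ascoli upgrades it to the continuity statement \eqref{item:F-is-cnts-p=infty} with no further effort.
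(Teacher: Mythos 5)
Your proof is correct and follows essentially the same route as the paper: the equivalences \eqref{item:F(X)-is-totbdd-in-ell-infty}$\Leftrightarrow$\eqref{item:F-in-lc-p=infty}$\Leftrightarrow$\eqref{item:FF-is-compact-p=infty} via Lemma \ref{lem:F-in-lpc-iff-F(X)-is-compact} and Theorem \ref{thm:compact-iff}, the observation $f_i=\F_F(e_i)$ to get \eqref{item:FF-is-compact-p=infty}$\Rightarrow$\eqref{item:F-is-totbdd-p=infty}, and the Arzel\`a--Ascoli theorem for \eqref{item:F-is-totbdd-p=infty}$\Rightarrow$\eqref{item:F-is-cnts-p=infty} are exactly the steps of the paper's proof. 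The only cosmetic difference is that you arrange everything as a single cycle, whereas the paper states the first two links as two-way equivalences before closing the loop.
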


\begin{proof}
  \eqref{item:F(X)-is-totbdd-in-ell-infty} \IFF\ \eqref{item:F-in-lc-p=infty}.
  It follows from Lemma \ref{lem:F-in-lpc-iff-F(X)-is-compact}.

  \eqref{item:F-in-lc-p=infty} \IFF\ \eqref{item:FF-is-compact-p=infty}.
  It follows from Theorem \ref{thm:compact-iff}.

  \eqref{item:FF-is-compact-p=infty} \FI\ \eqref{item:F-is-totbdd-p=infty}.
  Let $\cG = \set{\F_F(\be): \|\be\|_1 \leq 1}$. For every $i$,
  we have $f_i = \F_F(e_i)$, where $\set{e_1,e_2,\dotsc}$ is the standard unit vector basis for $\ell^p$.
  Therefore, $\cF\subset \cG$. Now, if $\F_F$ is a compact operator,
  then $\cG$ is a totally bounded set in $\A$. This implies that $\cF$ is also totally bounded in $\A$.

  \eqref{item:F-is-totbdd-p=infty} \FI\ \eqref{item:F-is-cnts-p=infty}.
  Let $\cF$ be a totally bounded set in $\A$.
  Given $s_0\in \Om$, by the Arzel\'a-Ascoli theorem, $\cF$ is equicontinuous at $s_0$;
  that is, for every $\e>0$, there is a neighbourhood $U$ of $s_0$ in $\Om$ such that
  \begin{equation*}
     |f_i(s) - f_i(s_0)| \leq \e \quad (s\in U,i\in\N).
  \end{equation*}

  Taking supremum over all $i\in\N$, we get $\|F(s)-F(s_0)\|_\infty\leq \e$ for all $s\in U$.
  This shows that $F$ is continuous at $s_0$.

  \eqref{item:F-is-cnts-p=infty} \FI\ \eqref{item:F(X)-is-totbdd-in-ell-infty}.
  Obvious.
\end{proof}

Although the following statement is part of the preceding discussion, we believe it
deserves to be presented as an independent result.

\begin{cor}\label{cor:F-is-cnts-iff-F(X)-is-ttbdd}
  Let $\Om$ be a compact Hausdorff space, and let $F:\Om\to\ell^p$, $1\leq p \leq \infty$,
  be a function with continuous components $f_1,f_2,\dotsc$. Then $F$ is continuous if and only if
  $F(\Om)$ is totally bounded in $\ell^p$.
\end{cor}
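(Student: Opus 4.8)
The plan is to obtain both implications directly from the equivalences already established in Theorem~\ref{thm:FF-is-compact} (for $1\leq p<\infty$) and Theorem~\ref{thm:FF-is-compact-p=infty} (for $p=\infty$), taking $\A=\c(\Om)$. The one obstruction is that those theorems are stated for $F\in\ell^p_b(\A)$, whereas the corollary assumes merely that $F$ takes values in $\ell^p$ and has continuous scalar components $f_i$. So the single new ingredient I must supply is that, in each direction, $F$ automatically lies in $\ell^p_b(\A)$; the conclusion is then exactly the equivalence, proved there, between total boundedness of $F(\Om)$ in $\ell^p$ and continuity of $F\colon\Om\to\ell^p$.

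The forward implication needs nothing beyond compactness: if $F$ is continuous, then $F(\Om)$ is the continuous image of the compact space $\Om$, hence compact in $\ell^p$ and in particular totally bounded. For the converse I assume $F(\Om)$ totally bounded, so that it is bounded and $M:=\sup_{s\in\Om}\|F(s)\|_p<\infty$, and I must check that $F\in\ell^p_b(\A)$, i.e.\ that $\normm{F}_p=\sup_{\|\mu\|\leq1}\|\mu(F)\|_p$ is finite, where the supremum runs over all Radon measures $\mu\in\M$ of norm at most $1$ (not merely the point masses $\de_s$, for which $\de_s(F)=F(s)$).

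For $1\leq p<\infty$ I would fix such a $\mu$ and $n\in\N$ and use finite-dimensional $\ell^p$--$\ell^q$ duality to write $\bigprn{\sum_{i=1}^n|\mu(f_i)|^p}^{1/p}=\sup_{\|\bla\|_q\leq1}\bigabs{\mu\bigprn{\sum_{i=1}^n\la_if_i}}$; since $\bignorm{\sum_{i=1}^n\la_if_i}_\Om\leq\sup_{s}\bigprn{\sum_{i=1}^n|f_i(s)|^p}^{1/p}\leq M$ by H\"older's inequality, each term is bounded by $\|\mu\|\,M\leq M$, and letting $n\to\infty$ gives $\|\mu(F)\|_p\leq M$, whence $\normm{F}_p\leq M<\infty$. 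For $p=\infty$ this step is the elementary interchange $\normm{F}_\infty=\sup_i\|f_i\|_\Om=\sup_s\sup_i|f_i(s)|=M$, so $F\in\ell^\infty(\A)$. With $F\in\ell^p_b(\A)$ secured, Theorem~\ref{thm:FF-is-compact} (resp.\ Theorem~\ref{thm:FF-is-compact-p=infty}) turns the assumed total boundedness of $F(\Om)$ into continuity of $F$. The only real work is this membership check, precisely the passage from control on the point evaluations $F(s)$ to control on all of $\set{\mu(F):\|\mu\|\leq1}$; everything else is a citation of the section's main theorems.
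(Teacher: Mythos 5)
Your proof is correct and follows essentially the paper's own route: the paper offers no separate argument for this corollary, presenting it as an immediate byproduct of the equivalences \eqref{item:F(X)-is-compact-in-lp}$\Leftrightarrow$\eqref{item:F:X-to-lp-is-cnts} in Theorem~\ref{thm:FF-is-compact} and \eqref{item:F(X)-is-totbdd-in-ell-infty}$\Leftrightarrow$\eqref{item:F-is-cnts-p=infty} in Theorem~\ref{thm:FF-is-compact-p=infty}, which is exactly what you do. Your H\"older--duality check that $F\in\ell^p_b(\c(\Om))$ is a legitimate (and welcome) patch for the hypothesis mismatch the paper glosses over, though for $1\leq p<\infty$ one could avoid it entirely by noting that the paper's proof of \eqref{item:F(X)-is-compact-in-lp}$\Rightarrow$\eqref{item:F:X-to-lp-is-cnts} uses only the total boundedness of $F(\Om)$ and the continuity of the components $f_i$, never the membership $F\in\ell^p_b(\c(\Om))$.
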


Finally, based on Theorem \ref{thm:FF-is-compact-p=infty}, we present a proof of Proposition \ref{prop:a-in-lc-iff-p=infty}.

\begin{proof}[Proof of Proposition $\ref{prop:a-in-lc-iff-p=infty}$]
  By the Banach-Alaoglu theorem, the closed unit ball $A^*_1$ is a compact Hausdorff space
  for the weak*\,topology. Let $\A=\c(A^*_1)$. For every $a\in A$, define $\hat a:A^*_1\to \C$, by
  $\hat a(\phi)=\phi(a)$. Then $\hat a\in \A$, $\|\hat a\|=\|a\|$ and the map $A \to \A$, $a\mapsto \hat a$,
  embeds $A$ as a closed subspace of $\A$. Let $\cF$ be a subset of $A$ and $\F:\ell^1\to A$ be a linear
  operator. The following statements hold;
  \begin{enumerate}[(1)]
    \item $\cF$ is totally bounded in $A$ if and only if $\cF$ is totally bounded in $\A$,
    \item $\F:\ell^1\to A$ is compact if and only if $\F:\ell^1\to \A$ is compact.
  \end{enumerate}

  For $\a=(a_1,a_2,\dotsc)$, let $\hat \a=(\hat a_1,\hat a_2,\dotsc)$.
  The observations above, combined with Theorem \ref{thm:FF-is-compact-p=infty}, demonstrate that
  the following statements are equivalent;
  \begin{enumerate}[(1)]
    \item $\a\in \ell^\infty_c(A)$,
    \item $\hat\a\in \ell^\infty_c(\A)$,
    \item the set $\set{\hat a_1,\hat a_2,\dotsc}$ is totally bounded in $\A$,
    \item the set $\set{a_1,a_2,\dotsc}$ is totally bounded in $A$.
  \end{enumerate}
  This completes the proof.
\end{proof}

\section{$\B(\H)$-valued operators}
\label{sec:B(H)}
\renewcommand{\A}{\cA}
In this section, we investigate the specific case of $\cA=\B(\H)$, where $\H$ is a Hilbert space.
The discussion is limited to $1<p<\infty$ so that $\fA=\ell^p$ is reflexive.
Given an element $\T=(T_1,T_2,\dotsc)$ of $\ell^p_b(\A)$, we use the same notation $\T$
to denote the induced operator;
\begin{equation}\label{eqn:TT}
  \T:\ell^q \to \A, \quad
  \T\be = \sum_{i=1}^\infty \be_i T_i.
\end{equation}

For $x,y\in \H$, define $\ip{\T x,y} = \bigprn{\ip{T_1x,y},\ip{T_2x,y},\dotsc}$.
If $\be=(\be_1,\be_2,\dotsc)$ is an element of $\ell^q$, then
\begin{equation*}
       \hat \be \ip{\T x,y} = \sum_{i=1}^\infty \be_i \ip{T_ix,y} = \ip{(\T\be)x,y}.
\end{equation*}
Moreover,
\begin{equation}\label{eqn:norm(Tbe)}
  \|\T\be\| = \sup\set{\|\ip{(\T\be) x,y}\|:\|x\|=\|y\|=1}.
\end{equation}
It is worth noting that the operator norm of $\T$ satisfies the following equality;
\begin{equation*}
  \|\T\| = \sup\set{\|\ip{\T x,y}\|:\|x\|=\|y\|=1}.
\end{equation*}

\begin{thm}\label{thm:TT-is-compact}
  For $\T\in\ell^p_b(\A)$, $1 < p < \infty$, the following are equivalent;
  \begin{enumerate}[$(1)$]
    \item \label{item:TT-is-compact}
          the operator $\T$, defined by \eqref{eqn:TT}, is compact,

    \item \label{item:T-in-lpc}
          the sequence $\T=(T_1,T_2,\dotsc)$ belongs to $\ell^p_c(\A)$,

    \item \label{item:<Tx,y>-is-totbdd}
          the set $\cX=\set{\ip{\T x, y}:\|x\|=\|y\|=1}$ is totally bounded in $\ell^p$,

    \item \label{item:<Tx,x>-is-totbdd}
          the set $\W=\set{\ip{\T x, x}:\|x\|=1}$ is totally bounded in $\ell^p$.
  \end{enumerate}
\end{thm}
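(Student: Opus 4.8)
The plan is to prove the cycle $(1)\Rightarrow(2)\Rightarrow(3)\Rightarrow(4)\Rightarrow(1)$. The equivalence $(1)\Leftrightarrow(2)$ is immediate from Theorem \ref{thm:compact-iff}, since $1<p<\infty$. For $(2)\Rightarrow(3)$, I would note that for unit vectors $x,y\in\H$ the vector functional $T\mapsto\ip{Tx,y}$ belongs to $\A^*_1$ and sends $\T$ to $\ip{\T x,y}$; hence $\cX\subseteq\set{\phi(\T):\phi\in\A^*_1}$, and a subset of a totally bounded set is totally bounded. The implication $(3)\Rightarrow(4)$ is trivial, as $\W\subseteq\cX$ upon taking $y=x$.

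The heart of the argument is $(4)\Rightarrow(1)$, which I would carry out in two steps. The first step, from $(4)$ to the total boundedness of $\cX$, rests on the polarization identity
\begin{equation*}
  4\ip{\T x,y}=\sum_{k=0}^{3}\I^{k}\,\ip{\T(x+\I^{k}y),\,x+\I^{k}y},
\end{equation*}
which holds componentwise, and hence in $\ell^p$, because $\ip{\T z,z}\in\ell^p$ for every $z\in\H$ (the functional $T\mapsto\ip{Tz,z}$ lies in $\A^*$ and $\T\in\ell^p_b(\A)$). Setting $z_k=(x+\I^ky)/\norm{x+\I^ky}$ when $x+\I^ky\neq0$ and discarding the vanishing term otherwise, the $k$-th summand equals $\I^k\norm{x+\I^ky}^2\ip{\T z_k,z_k}$ with $\ip{\T z_k,z_k}\in\W$ and $\norm{x+\I^ky}^2\leq4$. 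Thus $\cX\subseteq\tfrac14(S+S+S+S)$, where $S=\set{c\,w:\abs{c}\leq4,\ w\in\W}$. Since $\W$ totally bounded makes $\ol{\W}$ compact, $S$ is contained in the image of the compact set $\set{c:\abs c\leq4}\times\ol{\W}$ under scalar multiplication, hence totally bounded; a finite sumset and scalar multiple of totally bounded sets is again totally bounded, so $\cX$ is totally bounded.

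The second step deduces compactness from the total boundedness of $\cX$ via reflexivity. Because $1<p<\infty$, $\ell^p$ is reflexive, so by Corollary \ref{cor:fA-is-reflexive-iff-W=B} it suffices to show that $\T$ is \wcbs. Let $(\be_\la)$ be a bounded net in $\ell^q=(\ell^p)^*$ with $\be_\la\to\be$ in the weak*\,topology. By Lemma \ref{lem:w*-then-uniform}, $\hat\be_\la\to\hat\be$ uniformly on the totally bounded set $\cX$; and from \eqref{eqn:norm(Tbe)} together with $\ip{(\T\be)x,y}=\hat\be\ip{\T x,y}$ and the linearity of $\T$,
\begin{equation*}
  \norm{\T\be_\la-\T\be}=\sup_{\|x\|=\|y\|=1}\bigabs{(\hat\be_\la-\hat\be)\ip{\T x,y}}=\sup_{\xi\in\cX}\bigabs{(\hat\be_\la-\hat\be)(\xi)}\longrightarrow0.
\end{equation*}
Hence $\T\be_\la\to\T\be$ in $\B(\H)$, so $\T$ is \wcbs and therefore compact, completing the cycle.

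I expect the main obstacle to be the passage from the diagonal numerical range $\W$ (where $y=x$) to the full sesquilinear set $\cX$ in the first step of $(4)\Rightarrow(1)$. Polarization is the natural device, but one must verify that the unnormalized factors $\norm{x+\I^ky}^2$ do not spoil total boundedness; this is exactly why I would bound them uniformly by $4$ and absorb them, together with the scalars $\I^k$, into the compact disk $\set{c:\abs c\leq4}$, so that compactness of $\ol{\W}$ and continuity of scalar multiplication close the gap. The remaining implications are either formal or a direct application of the reflexivity identity $\WC=\K$ and Lemma \ref{lem:w*-then-uniform}.
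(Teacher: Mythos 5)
Your proposal is correct, and all four implications go through; the interesting divergence from the paper is in the key step $(4)\Rightarrow(1)$. The paper proves this implication in a single quantitative stroke: it invokes Theorem \ref{thm:Kolmogorov-Riesz-compactness} to get a tail bound $\sum_{i>m}|\ip{T_ix,x}|^p<\e^p$ uniform in the unit vector $x$, covers the weak*\,compact ball $\A^*_1$ by finitely many neighbourhoods $V_{\phi_j}$ determined by the first $m$ coordinates, manufactures from these an explicit weak*\,neighbourhood $U$ of $0$ in $\ell^q$, and proves the estimate $\|\T\be\|\leq 6\e$ for $\be\in U$ with $\|\be\|_q\leq 1$; polarization enters only at the very end of that claim, pointwise, to upgrade the bound $|\hat\be\ip{\T x,x}|\leq 3\e$ to $|\hat\be\ip{\T x,y}|\leq 6\e$. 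You instead factor the implication as $(4)\Rightarrow(3)\Rightarrow(1)$: first you apply polarization at the level of sets, absorbing the coefficients $\I^k\|x+\I^k y\|^2$ (correctly bounded by $4$) into the compact disk, so that $\cX\subseteq\tfrac14(S+S+S+S)$ with $S$ totally bounded, and then you verify the \wcbs property softly by applying Lemma \ref{lem:w*-then-uniform} to the totally bounded set $\cX$ together with \eqref{eqn:norm(Tbe)} — which is in fact the same mechanism the paper itself uses inside the proof of Proposition \ref{prop:dual-mapping}, here redeployed. Your route is shorter and more modular, and it isolates a fact of independent interest that the paper never states separately, namely that total boundedness of the diagonal joint numerical range $\W$ already forces total boundedness of the full sesquilinear set $\cX$; the price is an appeal to completeness of $\ell^p$ (compactness of $\ol{\W}$), though this could be avoided by a direct $\e$-net argument for $S$. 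The paper's hands-on argument buys explicit quantitative control (a concrete weak*\,neighbourhood on which $\|\T\be\|\leq6\e$) and keeps the proof self-contained at that point, essentially re-deriving the content of Lemma \ref{lem:w*-then-uniform} by hand via the Kolmogorov criterion. One cosmetic remark: in your final step, citing Theorem \ref{thm:wcbs-closed-subset-compact} (the inclusion $\WC\subset\K$, valid for any predual) suffices; the full reflexivity equivalence of Corollary \ref{cor:fA-is-reflexive-iff-W=B} is not needed for the direction you use it in.
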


\begin{proof}
  \eqref{item:TT-is-compact} \IFF\ \eqref{item:T-in-lpc}.
  It follows from Theorem \ref{thm:compact-iff}.

  \eqref{item:T-in-lpc} \FI\ \eqref{item:<Tx,y>-is-totbdd}.
  For $\phi\in \A^*$, let $\phi(\T)=(\phi(T_1),\phi(T_2),\dotsc)$ and set
  \begin{equation*}
     K=\set{\phi(\T): \phi\in \A^*_1}.
  \end{equation*}

  If $\T\in\ell^p_c(\A)$ then $K$ is totally bounded in $\ell^p$. For unit vectors $x,y\in \H$, define
  $\phi_{x,y}(T)=\ip{Tx,y}$, for all $T\in\A$. Then $\phi_{x,y}\in \A^*_1$ and $\phi_{x,y}(\T) = \ip{\T x,y}$.
  Hence, $\cX \subset K$ and thus $\cX$ is totally bounded.

  \eqref{item:<Tx,y>-is-totbdd} \FI\ \eqref{item:<Tx,x>-is-totbdd}.
  This is obvious, for $\W\subset \cX$.

  \eqref{item:<Tx,x>-is-totbdd} \FI\ \eqref{item:TT-is-compact}.
  Let $\W$ be totally bounded. We show that $\T$, as an operator, is \wcbs.
  Given $x\in \H$, define $\phi_x(T)=\ip{Tx,x}$, for all $T\in \A$. Then
  $\phi_x(\T) = \ip{\T x,x}$ so that $\W = \set{\phi_x(\T):\|x\|=1}$.
  Since $\W$ is totally bounded in $\ell^p$, by Theorem \ref{thm:Kolmogorov-Riesz-compactness},
  for every $\e>0$, there is $m\in \N$ such that
  \begin{equation}\label{eqn:phi(a)-is-compact}
    \sum_{i>m}|\ip{T_ix,x}|^p = \sum_{i>m} |\phi_x(T_i)|^p < \e^p \quad (x\in \H, \|x\|=1).
  \end{equation}

  For every $\phi\in \A^*$, let
  \[
    V_\phi = \bigset{\phi'\in \A^*: |\phi'(T_i)-\phi(T_i)|< \frac\e{m^{1/p}}, 1\leq i \leq m}.
  \]
  Each $V_\phi$ is a neighbourhood of $\phi$ in the weak*\,topology. Since $\A^*_1$ is
  weak*\,compact, there exist $\phi_1,\dotsc,\phi_k$ in $\A^*_1$ such that
    $\A^*_1 \subset V_{\phi_1} \cup \dotsb \cup V_{\phi_k}$. Let
   \begin{equation*}
      \bx_j = \bigprn{\phi_j(T_1),\dotsc,\phi_j(T_m),0,0,0,\dotsc}
      \quad (1\leq j \leq k)
   \end{equation*}
  The finite set $\set{\bx_1,\dotsc,\bx_k}$ in $\ell^p$ defines
  a weak*\,neighbourhood of $0$ in $\ell^q$;
  \begin{align*}
    U  & = \set{\beta \in \ell^q: |\hat\be(\bx_j)|<\e, 1\leq j \leq k}\\
       & = \Bigset{\beta \in \ell^q: \Bigabs{\sum_{i=1}^m \beta_i \phi_j(T_i)}<\e, 1\leq j \leq k}.
  \end{align*}

  \noindent\textbf{Claim.} If $\be\in U$ and $\|\be\|_q\leq 1$ then $\|\T\be\| \leq 6\e$.\\
  \textit{Proof of claim}.
  If $\|x\|=1$ then $\phi_x\in \A^*_1$ and $\phi_x\in V_{\phi_j}$,
  for some $j\in\set{1,\dotsc,k}$. Then
  \begin{align*}
    |\hat \be\ip{\T x,x}|
      & = \Bigabs{\sum_{i=1}^\infty \beta_i \ip{T_ix,x}} = \Bigabs{\sum_{i=1}^\infty \beta_i \phi_x(T_i)}
        \leq \Bigabs{\sum_{i=1}^m \beta_i \phi_x(T_i)} + \Bigabs{\sum_{i>m} \beta_i \phi_x(T_i)}\\
      & \leq \biggabs{\sum_{i=1}^m \beta_i\bigprn{\phi_x(T_i)-\phi_j(T_i)}} + \Bigabs{\sum_{i=1}^m \beta_i \phi_j(T_i)}
              + \|\beta\|_q \Bigprn{\sum_{i>m} |\phi_x(T_i)|^p}^{1/p} \\
      & \leq \|\beta\|_q \Bigprn{\sum_{i=1}^m |\phi_x(T_i)-\phi_j(T_i)|^p}^{1/p} + \e + \e \\
      & \leq \biggprn{\sum_{i=1}^m \frac{\e^p}m}^{1/p} + \e + \e = 3\e.
  \end{align*}

  Hence, $|\hat\be \ip{\T x,x}| \leq \|x\|^2 (3\e)$, for all $x\in \H$.
  It is easily verified that
  \begin{equation*}
    \ip{\T x,y} = \frac14 \sum_{k=0}^3 \I^k \ip{\T (x+\I^k y),x+\I^k y} \quad (x,y\in \H).
  \end{equation*}
  If $\|x\|=\|y\|=1$ then
  \begin{align*}
    |\hat \be\ip{\T x,y}|
      & \leq \frac14 \sum_{k=0}^3 \bigabs{\hat \be \ip{\T (x+\I^k y),x+\I^k y}} \\
      & \leq \frac14 \sum_{k=0}^3 \|x+\I^k y\|^2 (3\e)
        \leq 2(3\e) = 6 \e.
  \end{align*}
  Taking supremum over all $x,y\in \H$ with $\|x\|=\|y\|=1$, using \eqref{eqn:norm(Tbe)}, we get
  \begin{equation*}
     \|\T\be\| \leq 6\e \quad (\be\in U, \|\be\|_q\leq 1).
  \end{equation*}
  This proves the claim. We conclude that $\T$ is \wcbs.
\end{proof}

\subsection{Relations to the numerical range}
The numerical range of a single operator $T\in\A$ is defined by
$W(T)=\set{\ip{Tx,x}:\|x\|=1}$, and the numerical radius of $T$ is given by
$w(T)=\sup\set{|\al|:\al\in W(T)}$. It is proved that
\begin{equation}\label{eqn:num-rad-ineq}
    \frac12 \|T\| \leq  w(T) \leq \|T\|.
\end{equation}

If $\T=(T_1,\dotsc,T_n)$ is an $n$-tuple of operators, the joint numerical range and the joint numerical radius
of $\T$ are defined, respectively, by
\begin{equation}\label{eqn:W(T)}
  \W(\T) = \set{\ip{\T x,x}:\|x\|=1},\quad
  \w(\T)=\sup\set{\|u\|:u\in \W(\T)},
\end{equation}
where $\ip{\T x,x} = \bigprn{\ip{T_1x,x},\dotsc,\ip{T_nx,x}}$.
In the definition of $\w(\T)$, some authors utilize the Euclidean norm $\enorm_2$,
while many researchers examine the more general case represented by the $p$-norm,
$p\geq 1$. We believe that the norm considered in defining $\w(\T)$ should essentially
stem from the norm associated with the $n$-tuple $\T$ (i.e.,\ the operator norm of $\T$).
For a background on (joint) numerical ranges and numerical radius inequalities,
see \cite{Dragomir-Mosleh-lect-num-rad-ineq-2022}.

If $\T=(T_1,T_2,\dotsc)$  is an infinite sequence of operators, it is natural to define
the joint numerical range $\W(\T)$ and the joint numerical radius $\w(\T)$ in the same way,
as in \eqref{eqn:W(T)}. However, in the infinite case, as Theorem \ref{thm:TT-is-compact}
indicates, the sequence $\T$ must satisfy certain conditions to ensure that $\W(\T)$ is a bounded
subset of some $\ell^p$-space.

We conclude the paper with a restatement of Theorem \ref{thm:TT-is-compact}.

\begin{thm}\label{thm:restatement}
  Let $1<p<\infty$, $1/p+1/q=1$. An operator $\T:\ell^q\to \A$ is compact if and only if
  the joint numerical range $\W(\T)$ is a totally bounded subset of $\ell^p$, in which case
  \begin{equation}\label{eqn:w(T)=sup(w(beT))}
    \frac12\|\T\| \leq \w(\T) = \sup\set{w(\T\beta): \|\beta\|_q \leq 1} \leq \|\T\|.
  \end{equation}
\end{thm}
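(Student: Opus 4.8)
The plan is to separate the statement into its two components: the compactness characterization, and the chain $\tfrac12\|\T\|\leq\w(\T)=\sup\set{w(\T\be):\|\be\|_q\leq 1}\leq\|\T\|$. The first component—that $\T$ is compact if and only if $\W(\T)$ is totally bounded in $\ell^p$—is exactly the equivalence of items \eqref{item:TT-is-compact} and \eqref{item:<Tx,x>-is-totbdd} of Theorem \ref{thm:TT-is-compact}, since $\W(\T)=\set{\ip{\T x,x}:\|x\|=1}$. Nothing new is needed there, so I would simply invoke that theorem and then turn to the numerical-radius identities, which hold in the compact (equivalently, totally bounded) case where all quantities are finite.

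For the central equality, the key observation is that for every $\be\in\ell^q$ and every unit vector $x$ one has $\ip{(\T\be)x,x}=\hat\be\ip{\T x,x}$, whence $w(\T\be)=\sup_{\|x\|=1}\bigabs{\hat\be\ip{\T x,x}}$. I would take the supremum over $\|\be\|_q\leq 1$ and interchange the two suprema (which is always permissible) to obtain
\[
  \sup_{\|\be\|_q\leq 1} w(\T\be)
    = \sup_{\|x\|=1}\ \sup_{\|\be\|_q\leq 1}\bigabs{\hat\be\ip{\T x,x}}.
\]
Because $\T\in\ell^p_b(\A)$, each $\ip{\T x,x}$ is an element of $\ell^p$, and the $\ell^p$--$\ell^q$ duality recalled in the introduction gives $\sup_{\|\be\|_q\leq 1}\bigabs{\hat\be(u)}=\|u\|_p$ for every $u\in\ell^p$. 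Applying this with $u=\ip{\T x,x}$ collapses the inner supremum, leaving $\sup_{\|x\|=1}\|\ip{\T x,x}\|_p$, which is precisely $\w(\T)$ by the definition in \eqref{eqn:W(T)}.

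The two remaining inequalities then drop out by feeding the scalar estimate \eqref{eqn:num-rad-ineq} into this identity. Applying $w(\T\be)\leq\|\T\be\|$ to each single operator $\T\be$ and using $\|\T\|=\sup_{\|\be\|_q\leq 1}\|\T\be\|$ yields $\w(\T)=\sup_{\|\be\|_q\leq 1}w(\T\be)\leq\|\T\|$; applying $\tfrac12\|\T\be\|\leq w(\T\be)$ in the same way yields $\tfrac12\|\T\|\leq\w(\T)$.

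I do not anticipate a genuine obstacle, as the whole argument amounts to rewriting the pairing $\ip{(\T\be)x,x}$ and interchanging suprema. The one place warranting care is the duality step: I must first confirm that $\ip{\T x,x}\in\ell^p$, which holds because $\phi_x(T)=\ip{Tx,x}$ defines a functional in $\A^*_1$ and hence $\phi_x(\T)=\ip{\T x,x}\in\ell^p$ by membership of $\T$ in $\ell^p_b(\A)$; and the interchange of the suprema over $\be$ and over $x$, which should be stated explicitly even though it is routine.
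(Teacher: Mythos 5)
Your proposal is correct and takes essentially the same route as the paper's own proof: both invoke Theorem \ref{thm:TT-is-compact} for the compactness equivalence, derive $\w(\T)=\sup\set{w(\T\be):\|\be\|_q\leq 1}$ from the pairing identity $\ip{(\T\be)x,x}=\hat\be\ip{\T x,x}$ combined with the $\ell^p$--$\ell^q$ duality $\|u\|_p=\sup_{\|\be\|_q\leq 1}|\hat\be(u)|$, and then obtain the outer inequalities by feeding \eqref{eqn:num-rad-ineq} applied to each $\T\be$ into that identity. The only difference is cosmetic: you spell out the interchange of suprema and the membership $\ip{\T x,x}\in\ell^p$ (via $\phi_x\in\A^*_1$), steps the paper leaves implicit.
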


\begin{proof}
  In light of Theorem \ref{thm:TT-is-compact}, it remains only to prove \eqref{eqn:w(T)=sup(w(beT))}.
  \begin{align*}
    \w(\T) & = \sup\set{\|\ip{\T x,x}\|: \|x\|=1} \\
           & = \sup\set{|\hat \be \ip{\T x,x}|: \|x\|=1, \|\be\|_q\leq 1} \\
           & = \sup\set{|\ip{(\T\be) x,x}|: \|x\|=1, \|\be\|_q\leq 1} \\
           & = \sup\set{w(\T \be): \|\be\|_q\leq 1}.
  \end{align*}

  For every $\be\in\ell^q$, by \eqref{eqn:num-rad-ineq}, we have
  $\frac12\|\T\be\| \leq \w(\T\be) \leq \|\T\be\|$. Taking supremum over all
  $\be$ with $\|\be\|_q\leq 1$, we get
  \begin{equation*}
    \frac12\|\T\| \leq \w(\T) \leq \|\T\|.
    \qedhere
  \end{equation*}
\end{proof}

It worth noting that
\begin{equation*}
  \w(\T) = \sup_{\|x\|=1}\Bigprn{\sum_{i=1}^\infty |\ip{T_ix,x}|^p}^{1/p}.
\end{equation*}

\subsection*{Declarations}
\begin{description}
  \item[Funding] The author declares that no funds, grants, or other support were received during
                 the preparation of this manuscript.

  \item[Competing Interests] The author has no relevant financial or non-financial interests to disclose.

  \item[Data availability] The manuscript has no associated data.
\end{description}

\subsection*{Acknowledgement}
The author expresses his sincere gratitude to the referees for their careful reading and suggestions
that improved the presentation of this paper.

%\printbibliography
%\end{document}

\end{document}